\def\K{\operatorname{K}}
\def\map{\longrightarrow}
\def\GL{\operatorname{GL}}
\def\SL{\operatorname{SL}}
\def\Sp{\operatorname{Sp}}
\def\St{\operatorname{St}}
\def\EE{\operatorname{EE}}
\def\EU{\operatorname{EU}}
\def\SK{\operatorname{SK}}
\def\sr{\operatorname{sr}}
\def\Rat{{\mathbb Q}}
\def\Int{{\mathbb Z}}
\def\prim{\mathfrak p}
\def\pamod#1{\,(\operatorname{mod}{\, #1})\,}
\newtheorem{theorem}{Theorem}
\newtheorem{lemma}{Lemma}
\newtheorem{problem}{Problem}
\title[Multiple commutators of
elementary subgroups]{multiple commutators of
elementary subgroups: end of the line}
\author{Nikolai Vavilov}
\address{St. Petersburg State University}
\email{nikolai-vavilov@yandex.ru}
\thanks{This publication is supported by Russian Science Foundation grant 17-11-01261.}
\author{Zuhong Zhang}
\address{Department of  Mathematics, Beijing Institute
of Technology, Beijing, China}
\email{zuhong@hotmail.com}
 \date{}
\keywords{general linear group, congruence subgroups, elementary subgroups, standard commutator formulae}
\begin{document}


\begin{abstract}
In our previous joint papers with Roozbeh Hazrat and Alexei
Stepanov we established commutator formulas for relative
elementary subgroups in $\GL(n,R)$, $n\ge 3$, and other 
similar groups, such as Bak's unitary groups, or Chevalley 
groups. In particular, there it was shown that
multiple commutators of elementary subgroups can be reduced to
double such commutators. However, since the proofs of these 
results depended on the standard commutator formulas, it 
was assumed that the ground ring $R$ is quasi-finite. Here we
propose a different approach which allows to lift any such 
assumptions and establish almost definitive results. In particular,
we prove multiple commutator formulas, and other related 
facts for $\GL(n,R)$ over an {\it arbitrary} associative ring $R$.
\end{abstract}

\maketitle


\section{Introduction}
 	
In the present paper we put the last dot in the proof of the multiple commutator formula for relative/unrelative elementary subgroups of
$\GL(n,R)$. Namely, we establish that any higher such commutator
subgroup is equal to a double commutator subgroup of this form.
\par
More precisely, we generalise \cite{yoga-2}, Theorem 8A, and \cite{Hazrat_Vavilov_Zhang}, Theorem 5A, from quasi-finite rings, 
to arbitrary associative rings. The main result of the present paper 
asserts that for an {\it arbitrary\/} associative ring $R$, its arbitrary
 two-sided ideals $I_i\unlhd R$,  $i=1,\ldots,m$, and an arbitrary arrangment of brackets $[\![\ldots]\!]$ with the cut point\/ $s$
in the multiple commutator, one has 
$$ [\![E(n,I_1),E(n,I_2),\ldots,E(n,I_m)]\!]=
[E(n,I_1\circ\ldots\circ I_s),E(n,I_{s+1}\circ\ldots\circ I_m)], $$
\noindent
where $A\circ B=AB+BA$ is the symmetrised product of ideals,
see \S~4 for the precise statement. Most of the intermediate 
results are established for $n\ge 3$, but at one key point at the 
very end of the proof we have to assume that $n\ge 4$. In turn,
it is classically known that double such commutators are not in 
general equal to a single elementary subgroup, see \S~8.
\par
Jointly with Roozbeh Hazrat and Alexei Stepanov, we published
several similar results before, see, for instance, \cite{Hazrat_Zhang_multiple,
yoga-2, RNZ5, Stepanov_universal, Hazrat_Vavilov_Zhang}, not
just for the general linear group, but also for other types of 
groups, including Bak's unitary groups and Chevalley groups. 
However, all these results vitally depended on some additional 
assumptions on the ground rings, either some form of 
commutativity conditions, or some form of stability/dimension conditions.
\par
The reason was that the proofs of these results always hinged
on level calculations, with subsequent invocation of some form 
of the relative standard commutator formula. In \cite{yoga-2} we 
even say: ``In the next section we embark on the [somewhat 
easier] calculation of higher commutators of relative elementary subgroups $\ldots$ Even this turns out to be a rather non-trivial 
task. In fact, we do not see any other way to do that, but to 
prove a higher analogue of the standard commutator formula.''
\par
For $\GL(n,R)$ such birelative formulas at the stable level are 
classically known from the work of Hyman Bass, Alec Mason 
and Wilson Stothers \cite{Bass_stable,Mason_Stothers}. For 
rings subject to commutativity conditions, such formulas were 
established by Hong You, and then in our joint papers with 
Roozbeh Hazrat and Alexei Stepanov \cite{Vavilov_Stepanov_standard, Hazrat_Zhang, Vavilov_Stepanov_revisited, yoga-1, Hazrat_Vavilov_Zhang},
which in turn relied on a breed of the powerful methods proposed
by Andrei Suslin, Zenon Borewicz and ourselves, Leonid Vaserstein, 
Tony Bak, and others to prove standard commutator formulas in 
the absolute case, see \cite{Suslin, borvav, Vaserstein_normal, Stepanov_Vavilov_decomposition, Bak}.
\par
However, from the work of Victor Gerasimov \cite{Gerasimov}
it is known that the standard commutator formula may fail for 
general associative rings, even in the absolute case. Thus, there
is no hope whatsoever to establish the multiple commutator
formula over arbitrary commutative rings by the methods used
in \cite{Hazrat_Zhang_multiple,
yoga-2, RNZ5, Stepanov_universal, Hazrat_Vavilov_Zhang}.
\par
In the present paper we launch a completely different approach,
which entirely relies on {\it elementary\/} calculations inside
the absolute elementary group $E(n,R)$ itself. The starting point
of this approach were our recent papers on unrelativisation, and
restrained generating sets of elementary commutators, see
\cite{NV18, NZ1, NZ2, NZ3}. In fact, most of the calculations in the present paper are enhancement, refinement, or spin-off of our calculations in \cite{NZ1, NZ2, NZ3}. Of course, because of 
non-commutativity now we have to be much more circumspective 
and meticulous on several occasions.

Morally, these calculations are just a new stage of the vintage
elementary calculations in lower $K$-theory, as developed since 
1960-ies. We were
especially influenced by the works of Wilberd van der Kallen, 
such as \cite{vdK-group}. Since our calculations only depend on
Steinberg relations, they carry over {\it verbatim\/} also to 
Steinberg groups. In fact, a recent preprint by Andrei Lavrenov
and Sergei Sinchuk \cite{Lavrenov_Sinchuk} contained calculations
in the same spirit, over a commutative ring $R$.
\par
The paper is organised as follows. In \S~2 we recall some notation 
pertaining to multiple commutators, and in \S~3 briefly review 
the requisite facts on elementary subgroups in $\GL(n,R)$.
After that in \S~4 we state the main result of the present paper, 
the multiple elementary commutator formula, and show that it
can be easily derived from the two special cases: triple
commutators, and quadruple commutators. The proof for these 
special cases occupies \S\S~5--6, and it is our main new contribution,
and the technical core of the whole paper. Finally, in \S~7 we make some further related observations,
including a generalisation, from quasi-finite to arbitrary associative 
rings, of another theorem by the first author and Stepanov on 
$[E(n,A),E(n,B)]$ for coprime ideals $A+B=R$, and state some 
unsolved problems.


\section{Multiple commutators}

Let $G$ be a group. A subgroup $H\le G$ generated by a
subset $X\subseteq G$ will be denoted by $H=\langle X\rangle$.
For two elements $x,y\in G$ we denote by ${}^xy=xyx^{-1}$
and $y^x=x^{-1}yx$ the left and right conjugates of $y$ by $x$,
respectively. Further, we denote by 
$$ [x,y] = xyx^{-1}y^{-1}={}^xy\cdot y^{-1}=x\cdot{}^y{x^{-1}} $$
\noindent
the left-normed commutator of $x$ and $y$. Our multiple 
commutators are also left-normed. Thus, by default, $[x,y,z]$
denotes $[[x,y],z]$, we will use different notation for other
arrangement of brackets. Throughout the present paper we
repeatedly use the customary commutator identities, such as 
their multiplicativity with respect to the factors:
$$ [x,yz]=[x,y]\cdot{}^y[x,z],\qquad [xy,z]={}^x[y,z]\cdot[x,z], $$
\noindent
and a number of other similar identities, such as 
$${ [x,y]}^{-1}=[y,x],\qquad {}^z[x,y]=[{}^zx,{}^zy],\qquad 
[x^{-1},y]=[y,x]^x,\qquad [x,y^{-1}]=[y,x]^y, $$
\noindent
usually without any specific reference.
Iterating multiplicativity we see that the commutator 
$[x_1\ldots x_m,y]$ is the product of conjugates of the 
commutators $[x_i,y]$, $i=1,\ldots,m$. Obviously, a similar
claim holds also for $[x,y_1\ldots y_m]$.
\par
Further, for two subgroups $F,H\le G$ one denotes by $[F,H]$ 
their mutual commutator subgroup, spanned by all commutators 
$[f,h]$, where $f\in F$, $h\in H$. Clearly, $[F,H]=[H,F]$, and 
if $F,H\unlhd G$ are normal in $G$, then $[F,H]\unlhd G$ is also 
normal.
In the main results of the present paper, we wish to establish that
$[F,H]\le K$, for two subgroups $F,H\in G$, and a normal subgroup
$N\unlhd G$. This will be done as follows. 

\begin{lemma}
Let $G$ be a group, $F,H\le G$ be its subgroups and $N\unlhd G$
be its normal subgroup. Further, let $F=\langle X\rangle$ and
$H=\langle Y\rangle$ for some $X,Y\subseteq G$. Then
$$ [F,H]\le N\qquad \Longleftrightarrow\qquad 
[x,y]\in N,\quad x\in X,\ y\in Y. $$
\end{lemma}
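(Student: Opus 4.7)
The forward implication is immediate, since $X\subseteq F$ and $Y\subseteq H$ force $[x,y]\in[F,H]\le N$ for all $x\in X$, $y\in Y$. So the content is the reverse implication, and my plan is to show it by a two-stage expansion argument that exploits (i) the multiplicativity of the commutator in each slot, and (ii) the normality of $N$ in $G$.

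First, I would fix $y\in Y$ and show that $[f,y]\in N$ for every $f\in F$. Since $F=\langle X\rangle$, each such $f$ can be written as a word $f=x_1^{\e_1}\cdots x_k^{\e_k}$ with $x_i\in X$ and $\e_i=\pm 1$. Iterating the identity $[ab,y]={}^{a}[b,y]\cdot[a,y]$ from the preceding paragraph, $[f,y]$ becomes a product of conjugates (by elements of $G$) of the basic commutators $[x_i^{\e_i},y]$. Using $[x^{-1},y]=[y,x]^x$ and $[x,y]^{-1}=[y,x]$, each such basic commutator is a $G$-conjugate of $[x_i,y]^{\pm 1}$, hence lies in $N$ by hypothesis and by normality of $N$. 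Therefore $[f,y]\in N$ for every $f\in F$ and every $y\in Y$.

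Second, I would fix $f\in F$ and show $[f,h]\in N$ for every $h\in H$, by exactly the symmetric expansion. Writing $h=y_1^{\d_1}\cdots y_\ell^{\d_\ell}$ with $y_j\in Y$ and applying the dual identity $[f,bc]=[f,b]\cdot{}^{b}[f,c]$ repeatedly, we decompose $[f,h]$ into a product of $G$-conjugates of commutators of the form $[f,y_j^{\d_j}]$. Each of these is either $[f,y_j]$ or $[y_j,f]^{y_j}$, and in both cases it lies in $N$ by the first step and the normality of $N$. Hence $[f,h]\in N$.

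There is no serious obstacle here; the only point requiring the slightest care is the handling of inverse generators, which is absorbed by the identities $[x^{-1},y]=[y,x]^x$ and $[x,y^{-1}]=[y,x]^y$ together with the fact that $N$, being a normal subgroup, is closed under $G$-conjugation and under inversion. Since $[F,H]$ is generated as a subgroup by the elements $[f,h]$ with $f\in F$, $h\in H$, the containment $[F,H]\le N$ follows.
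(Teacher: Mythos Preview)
Your proof is correct and is the standard argument via multiplicativity of commutators and normality of $N$. The paper itself does not prove this lemma; it is stated without proof as an elementary background fact, so there is nothing to compare beyond noting that your argument is exactly the expected one.
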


To state our main results, we have to recall some further 
pieces of notation from \cite{yoga-1,Hazrat_Zhang_multiple,yoga-2, RNZ5, Hazrat_Vavilov_Zhang, Stepanov_universal}.
Namely, let $H_1,\ldots,H_m\le G$ be subgroups of $G$. There are 
many ways to form a higher commutator of these
groups, depending on where we put the brackets. Thus, for three
subgroups $F,H,K\le G$ one can form two triple commutators
$[[F,H],K]$ and $[F,[H,K]]$. Usually, we write $[H_1,H_2,\ldots,H_m]$ for the {\it left-normed\/} commutator, defined inductively by
$$ [H_1,\ldots,H_{m-1},H_m]=[[H_1,\ldots,H_{m-1}],H_m]. $$
\noindent
To stress that here we consider {\it any\/} commutator of these subgroups, with an arbitrary placement of brackets, we write $[\![H_1,H_2,\ldots,H_m]\!]$. Thus, for instance, $[\![F,H,K]\!]$ 
refers to any of the two arrangements above.
\par
Actually, a specific arrangment of brackets usually does not play
major role in our results -- apart from one important 
attribute\footnote{Actually, for non-commutative rings 
symmetric product of ideals is not associative, so that 
the initial bracketing of higher commutators will be reflected also 
in the bracketing of such higher symmetric products.}. 
Namely, what will matter a lot is the position of the outermost 
pairs of inner brackets. Namely, every higher commutator subgroup
$[\![H_1,H_2,\ldots,H_m]\!]$ can be uniquely written as
$$ [\![H_1,H_2,\ldots,H_m]\!]=
[[\![H_1,\ldots,H_s]\!],[\![H_{s+1},\ldots,H_m]\!]], $$
\noindent
for some $s=1,\ldots,m-1$. This $s$ will be called the cut point
of our multiple commutator. 


\section{Relative subgroups}

Let $G=\GL(n,R)$ be the general linear group of degree $n$ over an
associative ring $R$ with 1. In the sequel for a matrix
$g\in G$ we denote by $g_{ij}$ its matrix entry in the position
$(i,j)$, so that $g=(g_{ij})$, $1\le i,j\le n$. The inverse of
$g$ will be denoted by $g^{-1}=(g'_{ij})$, $1\le i,j\le n$.
\par
As usual we denote by $e$ the identity matrix of degree $n$
and by $e_{ij}$ a standard matrix unit, i.~e., the matrix
that has 1 in the position $(i,j)$ and zeros elsewhere.
An elementary transvection $t_{ij}(\xi)$ is a matrix of the
form $t_{ij}(c)=e+c e_{ij}$, $1\le i\neq j\le n$, $c\in R$. 
\par
Further, let $A$ be a two-sided of $R$. We consider the
corresponding reduction homomorphism
$$ \pi_A:\GL(n,R)\map\GL(n,R/A),\quad
(g_{ij})\mapsto(g_{ij}+A). $$
\noindent
Now, the {\it principal congruence subgroup} $\GL(n,R,A)$ of level $A$
is the kernel $\pi_A$, 
\par
The {\it unrelative elementary subgroup} $E(n,A)$ of level $A$
in $\GL(n,R)$ is generated by all elementary matrices of level 
$A$. In other words,
$$ E(n,A)=\langle e_{ij}(a),\ 1\le i\neq j\le n,\ a\in A \rangle. $$
\noindent
In general $E(n,A)$ has little chances to be normal in $\GL(n,R)$. 
The {\it relative elementary subgroup} $E(n,R,A)$  of level $A$
is defined as the normal closure of $E(n,A)$ in the absolute 
elementary subgroup $E(n,R)$:
$$ E(n,R,A)=\langle e_{ij}(a),\ 1\le i\neq j\le n,\ a\in A 
\rangle^{E(n,R)}. $$

The following lemma in generation of relative elementary subgroups
$E(n,R,A)$ is a classical result discovered in various contexts by 
Stein, Tits and Vaserstein, see, for instance, \cite{Vaserstein_normal}
(or \cite{Hazrat_Vavilov_Zhang}, Lemma 3, for a complete elementary
proof). It is stated in terms of the {\it Stein---Tits---Vaserstein 
generators\/}):
$$ z_{ij}(a,c)=t_{ij}(c)t_{ji}(a)t_{ij}(-c),\qquad
1\le i\neq j\le n,\quad a\in A,\quad c\in R. $$

\begin{lemma}
Let $R$ be an associative ring with $1$, $n\ge 3$, and let $A$ 
be a two-sided ideal of $R$. Then as a subgroup $E(n,R,A)$ is 
generated by $z_{ij}(a,c)$, for all $1\le i\neq j\le n$, $a\in A$, 
$c\in R$.
\end{lemma}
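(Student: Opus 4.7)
The plan is to establish the two inclusions separately. Write $H := \langle z_{ij}(a,c) : i \ne j,\ a \in A,\ c \in R \rangle$. The inclusion $H \le E(n,R,A)$ is immediate: the defining formula $z_{ij}(a,c) = t_{ij}(c)\, t_{ji}(a)\, t_{ij}(-c)$ presents $z_{ij}(a,c)$ as a conjugate of $t_{ji}(a) \in E(n,A)$ by $t_{ij}(c) \in E(n,R)$, hence as an element of $E(n,R,A)$.

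For the reverse inclusion, specialising $c = 0$ gives $z_{ij}(a,0) = t_{ji}(a)$, so $E(n,A) \le H$. Since $E(n,R,A)$ is by construction the $E(n,R)$-normal closure of $E(n,A)$, the proof will be complete once one shows that $H$ is normalised by $E(n,R)$. Because $E(n,R)$ is generated by the transvections $t_{kl}(b)$, $b \in R$, and $H$ is generated by the $z_{ij}(a,c)$, this reduces to the single membership
$$ t_{kl}(b)\, z_{ij}(a,c)\, t_{kl}(-b) \in H \quad \text{for all admissible indices and all }\, a \in A,\ b,c \in R. $$

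The verification of this membership proceeds by case analysis on the overlap $\{k,l\} \cap \{i,j\}$. After expanding $z_{ij}(a,c) = t_{ij}(c)\, t_{ji}(a)\, t_{ij}(-c)$ and distributing the outer conjugation over the three factors, each conjugated transvection is evaluated via the Chevalley commutator identities $[t_{mn}(x), t_{nq}(y)] = t_{mq}(xy)$ and $[t_{mn}(x), t_{pm}(y)] = t_{pn}(-yx)$. In the disjoint case the conjugate is $z_{ij}(a,c)$ itself; in each of the four partial-overlap configurations the two-sided ideal property of $A$ ensures that the spurious transvection factors produced by the Chevalley relations carry entries in $A$, are therefore of the form $z_{j'i'}(a',0) \in H$, and reassembly exhibits the conjugate as a product of $z$-generators. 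The full-overlap case $(k,l) = (i,j)$ is immediate, yielding $z_{ij}(a, b+c) \in H$.

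The main obstacle is the remaining full-overlap case $(k,l) = (j,i)$: the naive expansion produces expressions of the form $z_{ji}(\pm c, b)$, which are \emph{not} legitimate generators of $H$ because the parameter $c \in R$ need not lie in $A$. This is precisely where the hypothesis $n \ge 3$ enters. The idea is to pick an index $h \notin \{i,j\}$ and to rewrite the middle factor via the Chevalley commutator $t_{ji}(a) = [t_{jh}(1), t_{hi}(a)]$, so that
$$ z_{ij}(a,c) = \bigl[\, t_{ij}(c)\, t_{jh}(1)\, t_{ij}(-c),\ t_{ij}(c)\, t_{hi}(a)\, t_{ij}(-c)\, \bigr]. $$
Conjugating this commutator presentation by $t_{ji}(b)$ via ${}^x[y,z] = [{}^xy, {}^xz]$, each inner conjugation by $t_{ji}(b)$ now acts on a transvection whose indices overlap $\{j,i\}$ in at most one position, so it falls into the partial-overlap cases already handled; careful bookkeeping with the Chevalley relations shows that all spurious factors appearing upon reassembly carry entries in $A$, placing the whole expression in $H$ and completing the proof.
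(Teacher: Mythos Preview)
The paper does not supply its own proof of this lemma: it is quoted as a classical result of Stein, Tits, and Vaserstein, with a pointer to \cite{Vaserstein_normal} and to \cite{Hazrat_Vavilov_Zhang}, Lemma~3, for a complete elementary argument. Your outline is exactly the standard proof found in those references --- reduce to showing that $H$ is normalised by each $t_{kl}(b)$, dispatch the disjoint, partial-overlap, and $(k,l)=(i,j)$ cases directly from the Steinberg relations, and in the remaining case $(k,l)=(j,i)$ exploit a third index $h$ to factor $t_{ji}(a)=[t_{jh}(1),t_{hi}(a)]$ and thereby avoid the forbidden $z_{ji}(c,b)$.

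One caution about the last step. After rewriting and conjugating you arrive at a commutator of the shape $[t_{ih}(c)\,t_{jh}(bc+1),\,t_{hj}(-ac)\,t_{hi}(acb+a)]$; expanding by multiplicativity produces factors such as ${}^{t_{ih}(c)}[Q,ST]$, i.e.\ an element of $H$ conjugated by $t_{ih}(c)\notin H$. At that moment you cannot yet invoke normality of $H$, since that is precisely what is being proved. The argument is not circular, but only because the explicit factors of $[Q,ST]$ are $z$-generators whose index pairs meet $\{i,h\}$ in a partial overlap or in the easy full-overlap direction, so each individual conjugation by $t_{ih}(c)$ lands back in $H$ by the cases already established. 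Your phrase ``careful bookkeeping'' is accurate, but this is the one place where the bookkeeping has real content and should be written out rather than asserted.
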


Our main instrument in this paper is the following generalisation
of Lemma~1 to mutual commutator subgroups 
$[E(n,R,A),E(n,R,B)]$ of relative elementary subgroups.
\par
Denote by $A\circ B=AB+BA$ the symmetrised product of two-sided ideals $A$ and $B$. For commutative rings, $A\circ B=AB=BA$
is the usual product of ideals $A$ and $B$. However, in general, 
the symmetrised product is not associative. Thus, when writing something like $A\circ B\circ C$, we have to specify the order 
in which products are formed. The following level computation is
standard, see, for instance, \cite{Vavilov_Stepanov_standard,
Vavilov_Stepanov_revisited, Hazrat_Vavilov_Zhang}, and references
there.

\begin{lemma}
$R$ be an associative ring with $1$, $n\ge 3$, and let $A$ and $B$
be two-sided ideals of $R$.  Then 
$$ E(n,R,A\circ B)\le\big[E(n,A),E(n,B)\big]\le
\big[E(n,R,A),E(n,R,B)\big] \le\GL(n,R,A\circ B). $$
\end{lemma}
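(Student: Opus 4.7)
The statement is a chain of three inclusions, and my plan is to tackle them separately, reserving the real work for the left-most one.

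The middle inclusion $[E(n,A),E(n,B)]\le[E(n,R,A),E(n,R,B)]$ is formal: $E(n,A)\le E(n,R,A)$ and $E(n,B)\le E(n,R,B)$ by the very definition of the relative elementary subgroups as normal closures, so passing to mutual commutators preserves containment. The right-most inclusion I would handle by reducing modulo $A\circ B$. Setting $\bar R=R/(A\circ B)$, the inclusions $AB,BA\subseteq A\circ B$ say that the images $\bar A,\bar B\subset\bar R$ satisfy $\bar A\bar B=\bar B\bar A=0$. Writing $g=e+\alpha\in\GL(n,R,A)$ and $h=e+\beta\in\GL(n,R,B)$ with entries of $\alpha$, $\beta$ in $A$, $B$ respectively, the matrix products $\alpha\beta$ and $\beta\alpha$ vanish modulo $A\circ B$; hence $\bar g\bar h=e+\bar\alpha+\bar\beta=\bar h\bar g$ in $\GL(n,\bar R)$. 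Since $E(n,R,A)\le\GL(n,R,A)$ and similarly for $B$, every commutator $[g,h]$ with $g\in E(n,R,A)$, $h\in E(n,R,B)$ dies in $\GL(n,\bar R)$, as required.

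The left-most inclusion $E(n,R,A\circ B)\le[E(n,A),E(n,B)]$ is the real content, and the genuine obstacle is that $[E(n,A),E(n,B)]$ is not visibly normal in $E(n,R)$: merely exhibiting the standard elementary generators $t_{ij}(c)$ for $c\in A\circ B$ inside it is not enough, since conjugates by arbitrary elementary matrices must be controlled too. The plan is to invoke Lemma~2, which says that $E(n,R,A\circ B)$ is generated as a plain subgroup by the Stein---Tits---Vaserstein elements $z_{ij}(c,d)=t_{ij}(d)t_{ji}(c)t_{ij}(-d)$, $c\in A\circ B$, $d\in R$, $i\ne j$. Because $z_{ij}(c_1+c_2,d)=z_{ij}(c_1,d)z_{ij}(c_2,d)$, the task reduces to the two cases $c=ab$ and $c=ba$ with $a\in A$, $b\in B$. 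For $c=ab$ I would pick $k\ne i,j$ (possible since $n\ge 3$) and expand $t_{ji}(ab)=[t_{jk}(a),t_{ki}(b)]$ by the Chevalley commutator formula; conjugation by $t_{ij}(d)$ distributes over the commutator, giving
$$ z_{ij}(ab,d)=\big[\,t_{ij}(d)t_{jk}(a)t_{ij}(-d)\,,\,t_{ij}(d)t_{ki}(b)t_{ij}(-d)\,\big]. $$
A short Steinberg-relation computation reduces each conjugate to a product of transvections at the correct level, namely to $t_{ik}(da)\,t_{jk}(a)\in E(n,A)$ and $t_{kj}(-bd)\,t_{ki}(b)\in E(n,B)$, using that $A$ and $B$ are two-sided and hence $da\in A$, $bd\in B$. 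Therefore $z_{ij}(ab,d)\in[E(n,A),E(n,B)]$, and the case $c=ba$ is entirely symmetric. In sum, the hard point is the left-most inclusion; once the Stein---Tits---Vaserstein description reduces normal generation to ordinary subgroup generation, what remains is a clean Steinberg-relation calculation.
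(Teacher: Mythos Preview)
Your argument is correct in all three parts. The middle inclusion is trivial; the right-most inclusion via reduction modulo $A\circ B$ is clean and valid; and the left-most inclusion is handled exactly as it should be---Lemma~2 reduces the problem to the Stein--Tits--Vaserstein generators $z_{ij}(c,d)$, additivity in $c$ reduces to $c=ab$ or $c=ba$, and then the Chevalley commutator formula $t_{ji}(ab)=[t_{jk}(a),t_{ki}(b)]$ plus conjugation by $t_{ij}(d)$ lands you in $[E(n,A),E(n,B)]$, using that $A$ and $B$ are two-sided so that $da\in A$ and $bd\in B$.

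There is nothing to compare against: the paper does not prove this lemma at all, but simply records it as a ``standard level computation'' and points to \cite{Vavilov_Stepanov_standard, Vavilov_Stepanov_revisited, Hazrat_Vavilov_Zhang} for a proof. What you have written is precisely the standard argument one finds in those sources, so your proposal both fills the gap the paper leaves and matches the intended route.
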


In the following theorem a further type of generators occur, the
{\it elementary commutators\/}:
$$ y_{ij}(a,b)=[t_{ij}(a),t_{ji}(b)],\qquad
1\le i\neq j\le n,\quad a\in A,\quad b\in B. $$

The following analogue of Lemma~1 for commutators 
$[E(n,R,A),E(n,R,B)]$ was discovered (in slightly less precise 
forms) by Roozbeh Hazrat and the second author, see \cite{Hazrat_Zhang_multiple}, Lemma 12 and then in our joint 
paper with Hazrat \cite{Hazrat_Vavilov_Zhang}, Theorem~3A. 
The strong form reproduced below is established only in our
paper \cite{NZ2}, Theorem~1, as an aftermath of our papers
\cite{NV18, NZ1}.

\begin{lemma}
Let $R$ be any associative ring with $1$, let $n\ge 3$, and let $A,B$ 
be two-sided ideals of $R$. Then the mixed commutator subgroup 
$[E(n,R,A),E(n,R,B)]$ is generated as a group by the elements of the form
\par\smallskip
$\bullet$ $z_{ij}(ab,c)$ and $z_{ij}(ba,c)$,
\par\smallskip
$\bullet$ $y_{ij}(a,b)$,
\par\smallskip\noindent
where $1\le i\neq j\le n$, $a\in A$, $b\in B$, $c\in R$. Moreover,
for the second type of generators, it suffices to fix one pair of
indices $(i,j)$.
\end{lemma}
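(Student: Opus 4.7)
The inclusion $\supseteq$ is immediate: the elements $z_{ij}(ab,c)$ and $z_{ij}(ba,c)$ lie in $E(n,R,A\circ B)$ by the Stein--Tits--Vaserstein lemma applied to the ideal $A\circ B=AB+BA$, which in turn sits inside $[E(n,R,A),E(n,R,B)]$ by the level inclusion of the previous lemma; and $y_{ij}(a,b)=[t_{ij}(a),t_{ji}(b)]$ is a commutator of $t_{ij}(a)\in E(n,A)\le E(n,R,A)$ with $t_{ji}(b)\in E(n,B)\le E(n,R,B)$.

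For $\subseteq$, denote by $H_{0}$ the subgroup generated by the listed elements. The plan is first to verify that $H_{0}$ is normal in $E(n,R)$, and then, using the criterion of \S~2 with $F=E(n,R,A)$ and $H=E(n,R,B)$ (both generated by the Stein--Tits--Vaserstein generators), to reduce the problem to checking that $[z_{ij}(a,c),z_{kl}(b,d)]\in H_{0}$ for all admissible indices and parameters. For this last check, I expand $z_{pq}(x,y)=t_{pq}(y)t_{qp}(x)t_{pq}(-y)$ and apply the commutator multiplicativity identities of \S~2 to rewrite every such $[z,z]$ as a product of conjugates of elementary commutators $[t_{\alpha\beta}(a'),t_{\gamma\delta}(b')]$ with $a'\in A$, $b'\in B$. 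By the Steinberg commutator formulas, each such elementary commutator is either trivial, a single transvection of the form $t_{pq}(\pm a'b')$ or $t_{pq}(\pm b'a')$ (hence in $E(n,A\circ B)\le H_{0}$), or---in the single configuration where the two index pairs are opposite---a fresh $y_{\alpha\beta}(a',b')$.

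The principal obstacle is the control of these residual $y$'s, which may arise at any pair $(\alpha,\beta)$ and carry arbitrary transvection conjugations, whereas $H_{0}$ contains $y_{ij}(a,b)$ only for a single fixed pair and without conjugates. The plan is to prove, by direct Steinberg computation exploiting $n\ge 3$, the two facts
$$ y_{\alpha\beta}(a,b)\cdot y_{ij}(a,b)^{-1}\in E(n,R,A\circ B),\qquad [t_{pq}(c),y_{ij}(a,b)]\in E(n,R,A\circ B), $$
for all $\alpha\ne\beta$, $p\ne q$ and $c\in R$. Their derivations use the identity $t_{\alpha\gamma}(a)=[t_{\alpha\beta}(a),t_{\beta\gamma}(1)]$ with an auxiliary third index $\gamma$, the Hall--Witt identity, and careful expansion; each intermediate term is a transvection of shape $t_{pq}(ab)$ or $t_{pq}(ba)$ and therefore sits in $E(n,R,A\circ B)$, which is in turn contained in $H_{0}$ via its $z$-generators. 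These two facts simultaneously deliver the ``single pair suffices'' reduction for the $y$'s and the normality of $H_{0}$ in $E(n,R)$ invoked at the outset. The only conceptual subtlety, and the one point where non-commutativity bites, is the meticulous tracking of $ab$ versus $ba$ through every step of the identities.
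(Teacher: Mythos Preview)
The paper does not itself prove this lemma; it is quoted from \cite{NZ2}, Theorem~1. However, the paper's Lemmas~9 and~11 are explicitly described as reproductions of the key steps in that proof, and your ``two facts'' are exactly these: your second fact is Lemma~9 (conjugation invariance of $y_{ij}(a,b)$ modulo $E(n,R,A\circ B)$), and your first is the $c=1$ case of Lemma~11. So your plan---establish those two congruences first, deduce normality of $H_0$, then reduce to commutators of generators via Lemma~1---matches the cited argument.

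There is one genuine imprecision. Your claim that expanding both $z$-generators and applying multiplicativity rewrites $[z_{ij}(a,c),z_{kl}(b,d)]$ as a product of conjugates of $[t_{\alpha\beta}(a'),t_{\gamma\delta}(b')]$ with $a'\in A$, $b'\in B$ is not correct as written: the naive nine-term expansion produces commutators such as $[t_{ij}(c),t_{kl}(d)]$ with both parameters in $R$, and these are not visibly in $H_0$. The right manoeuvre is to use the identity $z_{ij}(a,c)={}^{t_{ij}(c)}t_{ji}(a)$ once on each side to get
\[
[z_{ij}(a,c),z_{kl}(b,d)]={}^{t_{ij}(c)}\bigl[t_{ji}(a),\,{}^{t_{ij}(-c)t_{kl}(d)}t_{lk}(b)\bigr],
\]
and then---after normality of $H_0$ is already in hand---rewrite the inner conjugate ${}^{t_{ij}(-c)t_{kl}(d)}t_{lk}(b)$ as a product of transvections with parameters in $B$. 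The obstruction is the single case where the conjugating transvection has position $(k,l)$ opposite to $(l,k)$; that is where your third-index trick $t_{lk}(b)=[t_{lh}(b),t_{hk}(1)]$ must be invoked \emph{here}, not only in the derivation of the two facts. Once this is done, multiplicativity genuinely reduces everything to Steinberg commutators $[t_{\ast}(a'),t_{\ast}(b')]$ with $a'\in A$, $b'\in B$, and your case analysis goes through. The paper itself signals that this reduction ``is rather tricky'', so the step deserves more than a sentence.
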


Since all generators listed in Lemma~3 belong already to the
commutator subgroup of unrelative elementary subgroups, we
get the following corollary, \cite{NZ2}, Theorem 2.

\begin{lemma}
Let $R$ be any associative ring with $1$, let $n\ge 3$, and let $A,B$ 
be two-sided ideals of $R$.  Then one has
$$ \big[E(n,R,A),E(n,R,B)\big]=\big[E(n,R,A),E(n,B)\big]=\big[E(n,A),E(n,B)\big]. $$
\end{lemma}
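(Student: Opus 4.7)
The plan is to collapse the three groups into a single containment argument. The two inclusions
$$[E(n,A),E(n,B)] \le [E(n,R,A),E(n,B)] \le [E(n,R,A),E(n,R,B)]$$
are immediate from $E(n,I)\le E(n,R,I)$ for any ideal $I$, so the whole lemma reduces to the reverse containment $[E(n,R,A),E(n,R,B)] \le [E(n,A),E(n,B)]$.

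To establish this, I would invoke Lemma~3, which produces an explicit generating set for $[E(n,R,A),E(n,R,B)]$ as an abstract group, consisting of the Stein---Tits---Vaserstein elements $z_{ij}(ab,c)$, $z_{ij}(ba,c)$, together with the elementary commutators $y_{ij}(a,b)$, for $a\in A$, $b\in B$, $c\in R$. Since a subgroup is closed under products and inverses, a group generated by elements that all lie in a given subgroup is itself contained in that subgroup, so it suffices to verify that each listed generator belongs to $[E(n,A),E(n,B)]$.

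For the $y$-type generators the verification is tautological: $y_{ij}(a,b)=[t_{ij}(a),t_{ji}(b)]$ is by definition a commutator of an element of $E(n,A)$ with one of $E(n,B)$. For the $z$-type generators I would work at the level of ideals. The element $z_{ij}(ab,c)$ is the conjugate of $t_{ji}(ab)\in E(n,AB)$ by $t_{ij}(c)\in E(n,R)$, hence it belongs to the $E(n,R)$-normal closure $E(n,R,AB)$, which in turn sits inside $E(n,R,A\circ B)$ because $AB\le A\circ B$. The sandwich in Lemma~2 then places $E(n,R,A\circ B)$ inside $[E(n,A),E(n,B)]$, which gives the required containment. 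The generator $z_{ij}(ba,c)$ is handled identically with the roles of $A$ and $B$ interchanged.

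No substantive obstacle is anticipated at this stage: all the delicate Steinberg-relation calculations have already been absorbed into Lemmas~2 and~3, the latter being the strong form from \cite{NZ2}. The present lemma is essentially the immediate corollary one obtains by chaining those two generation results together; the only mildly non-obvious input is recognising that the $z$-generators, which at first sight look unrelative of level $A\circ B$, can be pulled back from their ambient relative elementary subgroup into the bare unrelative commutator $[E(n,A),E(n,B)]$ via Lemma~2.
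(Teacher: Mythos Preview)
Your argument is correct and is exactly the paper's own justification: the paper derives this lemma as an immediate corollary of the generator list for $[E(n,R,A),E(n,R,B)]$ together with the level inclusion $E(n,R,A\circ B)\le[E(n,A),E(n,B)]$. Your cross-references are off by one, however --- what you cite as Lemma~2 is the paper's Lemma~3, and your Lemma~3 is the paper's Lemma~4 --- so adjust the labels before splicing.
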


In particular, it follows that $[E(n,A),E(n,B)]$ is normal in
$E(n,R)$. In fact, \cite{NZ2}, Lemma~3, implies a much stronger
fact that the quotient
$$ \big[E(n,R,A),E(n,R,B)\big]/E(n,R,A\circ B)=
\big[E(n,A),E(n,B)\big]/E(n,R,A\circ B) $$ 
\noindent
is central in $E(n,R)/E(n,R,A\circ B)$. In other words, 

\begin{lemma}
Let $R$ be an associative ring with $1$, $n\ge 3$, and let $A,B$ 
be two-sided ideals of $R$. Then
$$ \big[\big[E(n,A),E(n,B)\big],E(n,R)\big]=E(n,R,AB+BA). $$
\end{lemma}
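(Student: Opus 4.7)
The identity $AB+BA=A\circ B$ reduces the claim to two inclusions, each of which is essentially a repackaging of material already in place.

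For $[[E(n,A),E(n,B)],E(n,R)]\le E(n,R,A\circ B)$, I would simply invoke the centrality statement in the paragraph immediately preceding the lemma: the quotient $[E(n,A),E(n,B)]/E(n,R,A\circ B)$ is central in $E(n,R)/E(n,R,A\circ B)$. By definition this means every commutator $[x,y]$ with $x\in[E(n,A),E(n,B)]$ and $y\in E(n,R)$ already lies in $E(n,R,A\circ B)$, and since such commutators generate the outer commutator subgroup (Lemma~1), the inclusion follows.

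For the reverse inclusion $E(n,R,A\circ B)\le [[E(n,A),E(n,B)],E(n,R)]$, I would combine Lemma~2, which gives $E(n,R,A\circ B)\le[E(n,A),E(n,B)]$, with the classical fact that $E(n,R,J)=[E(n,R,J),E(n,R)]$ for $n\ge 3$ and any two-sided ideal $J$. The latter is immediate from the Steinberg identity $t_{ij}(a)=[t_{ik}(a),t_{kj}(1)]$ (valid for any distinct $i,j,k$, so for some $k$ whenever $n\ge 3$) together with the normality of $E(n,R,J)$ in $E(n,R)$. Applying this with $J=A\circ B$ and substituting $E(n,R,A\circ B)\le[E(n,A),E(n,B)]$ into the first factor of the commutator yields the desired containment.

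There is no real technical obstacle left here; the present lemma is a direct reformulation of the preceding centrality assertion, making explicit that the ideal governing the outer commutator is sharp, equal to $AB+BA$. All the genuinely nontrivial work has already been done in \cite{NZ2}, Lemma~3, and in Lemma~3 of the present excerpt, which supplies the explicit generating set for $[E(n,R,A),E(n,R,B)]$ on which the centrality proof ultimately rests.
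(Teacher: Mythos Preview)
Your proof is correct and matches the paper's own treatment: the paper does not give a separate proof of this lemma but presents it explicitly as a restatement (``In other words,'') of the centrality of $[E(n,A),E(n,B)]/E(n,R,A\circ B)$ in $E(n,R)/E(n,R,A\circ B)$, which is exactly the inclusion you derive first; the reverse inclusion you supply via $E(n,R,J)=[E(n,R,J),E(n,R)]$ and the level inclusion is indeed the standard easy direction the paper leaves implicit. One small housekeeping point: your internal references are off by one---what you cite as Lemma~2 (the level computation $E(n,R,A\circ B)\le[E(n,A),E(n,B)]$) is the paper's Lemma~3, and the generating-set result you call Lemma~3 is the paper's Lemma~4.
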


In particular, $\big[E(n,A),E(n,B)\big]/E(n,R,A\circ B)$ is
itself abelian.


\section{Main theorem}

In this section we consider applications of \cite{NZ2}, Theorem 1 
(= Lemma 4 above) to multiple commutators. In particular, here 
we generalise \cite{yoga-2}, Theorem 8A, and \cite{Hazrat_Vavilov_Zhang}, 
Theorem 5A, from quasi-finite rings, to arbitrary associative rings. 
Namely, we prove that any multiple commutator of relative or
unrelative elementary subgroups is  equal to some double such
commutator. 

\begin{theorem}
Let $R$ be any associative ring with $1$, let $n\ge 4$, and let 
$I_i\unlhd R$, $i=1,\ldots,m$,  be two-sided ideals of $R$. 
Consider an arbitrary arrangment of brackets\/ $[\![\ldots]\!]$
with the cut point\/ $s$. Then one has
$$
[\![E(n,I_1),E(n,I_2),\ldots,E(n,I_m)]\!]=
[E(n,I_1\circ\ldots\circ I_s),E(n,I_{s+1}\circ\ldots\circ I_m)], $$
\noindent
where the bracketing of symmetrised products on the right hand side coincides with the bracketing of the commutators on the left hand side.
\end{theorem}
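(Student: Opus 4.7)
The plan is to argue by induction on the length $m$ of the multiple commutator, reducing the general case to two base cases: the left-normed triple commutator and the quadruple commutator with an interior cut. The case $m=2$ is vacuous. For $m\ge 3$, write the given commutator as $[L,R]$ where $L=[\![E(n,I_1),\ldots,E(n,I_s)]\!]$ and $R=[\![E(n,I_{s+1}),\ldots,E(n,I_m)]\!]$. If $s=1$, the inductive hypothesis rewrites $R$ as a double commutator $[E(n,J_1),E(n,J_2)]$, yielding the triple pattern $[E(n,I_1),[E(n,J_1),E(n,J_2)]]$; symmetrically for $s=m-1$. Otherwise both $L$ and $R$ have length at least two, both collapse to double commutators by the inductive hypothesis, and we land in the quadruple pattern $[[E(n,J_1),E(n,J_2)],[E(n,J_3),E(n,J_4)]]$. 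In each case the ideals $J_i$ are the correctly bracketed symmetrised products, so granting the two base cases the formula for length $m$ is immediate.

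For the triple case
$$ [[E(n,A),E(n,B)],E(n,C)]=[E(n,A\circ B),E(n,C)], $$
the inclusion $\supseteq$ is immediate from Lemma~3, which gives $E(n,A\circ B)\le E(n,R,A\circ B)\le[E(n,A),E(n,B)]$. For the reverse inclusion, Lemma~1 reduces the verification to checking that $[g,t_{kl}(c)]$ lies in $[E(n,A\circ B),E(n,C)]$ for $c\in C$ and $g$ running over the three families of generators $z_{ij}(ab,c')$, $z_{ij}(ba,c')$, $y_{ij}(a,b)$ of $[E(n,A),E(n,B)]=[E(n,R,A),E(n,R,B)]$ supplied by Lemma~4. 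The Stein--Tits--Vaserstein generators are $E(n,R)$-conjugates of transvections $t_{ji}(ab)$ or $t_{ji}(ba)$ already sitting in $E(n,A\circ B)$, so standard expansions of $[{}^xy,z]$ together with Steinberg commutator relations rewrite their commutators with $t_{kl}(c)$ as products of commutators of elements of $E(n,A\circ B)$ with elements of $E(n,C)$, modulo error terms in $E(n,R,A\circ B\circ C)$ which the right-hand side already absorbs by Lemma~3. The elementary-commutator generators $y_{ij}(a,b)$ are handled by a Hall--Witt manoeuvre together with Steinberg identities: the disjoint-index subcases collapse directly, and in the overlapping subcases a transvection with coefficient in $A\circ B$ is extracted and paired with $t_{kl}(c)$.

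The quadruple case with interior cut
$$ [[E(n,A),E(n,B)],[E(n,C),E(n,D)]]=[E(n,A\circ B),E(n,C\circ D)] $$
runs on the same template, but now with generators entering from both sides. One reduces to commutators $[g,h]$ where $g$ is a $z$- or $y$-generator of $[E(n,A),E(n,B)]$ and $h$ is the analogous generator of $[E(n,C),E(n,D)]$, and performs the case analysis by the overlap pattern of the two index pairs. The hypothesis $n\ge 4$ enters precisely here, to furnish a free index outside the labels already in play, serving as a Steinberg pivot with which to factor the intermediate transvections and display the answer as a product of commutators of elements of $E(n,A\circ B)$ against elements of $E(n,C\circ D)$.

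The main obstacle will be the quadruple case. Non-commutativity of $R$ makes $\circ$ non-associative, so the exact bracketing of the output ideals must be propagated through every step of the rewriting; and since the relative standard commutator formula is unavailable in this generality, all manipulations must remain inside the absolute elementary group $E(n,R)$, driven only by Steinberg identities and the unrelativisation machinery of Lemmas~4--6, in the spirit of the calculations of \cite{NZ1,NZ2,NZ3}.
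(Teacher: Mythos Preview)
Your inductive architecture is exactly that of the paper: reduce to the triple case (Lemma~7) and the interior-cut quadruple case (Lemma~8), then recurse. Your treatment of the triple case also matches: kill the $z$-type generators because they already lie in $E(n,R,A\circ B)$, and for $y_{ij}(a,b)$ strip the ambient $E(n,R)$-conjugation using the centrality of $y_{ij}(a,b)$ modulo $E(n,R,A\circ B)$ (the paper's Lemma~9), then compute $[y_{ij}(a,b),t_{kl}(c)]$ position by position.

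The one place where the paper is sharper than your outline is the quadruple case. You anticipate a full overlap-pattern case analysis on $[y_{ij}(a,b),y_{hk}(c,d)]$, with $n\ge 4$ supplying a Steinberg pivot. The paper avoids this entirely. By Lemma~11 one has $y_{hk}(c,d)\equiv y_{k'l'}(c,d)\pmod{E(n,R,C\circ D)}$ for \emph{any} position $(k',l')$; when $n\ge 4$ one may take $\{k',l'\}$ disjoint from $\{i,j\}$, so the two elementary commutators literally commute, and only the error factor in $E(n,R,C\circ D)$ survives---which is absorbed by the already-proven triple case. No Hall--Witt identity and no subcase analysis are needed. This is precisely why $n\ge 4$ enters, and why the paper leaves $n=3$ open (Problem~1).
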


Actually this theorem easily follows by induction on $m$ from
the following two special cases, triple commutators, and quadruple commutators.

\begin{lemma}
Let $R$ be an associative ring with $1$, $n\ge 3$, and let $A,B,C$ 
be two-sided ideals of $R$. Then
$$ \big[\big[E(n,A),E(n,B)\big],E(n,C)\big]=
\big[E(n,A\circ B),E(n,C)\big]. $$
\end{lemma}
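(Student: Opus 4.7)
The plan is to prove both inclusions. The containment $\bigl[E(n, A\circ B), E(n, C)\bigr] \subseteq \bigl[[E(n, A), E(n, B)], E(n, C)\bigr]$ is immediate from Lemma~3, which gives $E(n, A\circ B) \le [E(n, A), E(n, B)]$, together with monotonicity of the commutator bracket. For the reverse inclusion, set $N = [E(n, A\circ B), E(n, C)]$; by Lemma~5 applied to $A\circ B$ and $C$ one has $N = [E(n, R, A\circ B), E(n, R, C)]$, so $N$ is normal in $E(n, R)$. By Lemma~1 it therefore suffices to verify $[g, t_{kl}(c)] \in N$ for $c \in C$ and each generator $g$ of $[E(n, A), E(n, B)]$ supplied by Lemmas~4 and~5. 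These come in two flavours: the Stein--Tits--Vaserstein generators $z_{ij}(d, c') = {}^{t_{ij}(c')} t_{ji}(d)$ with $d \in AB \cup BA \subseteq A\circ B$ and $c' \in R$, and the elementary commutators $y_{ij}(a, b)$ for one fixed pair $(i, j)$.

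For the $z$-generators, the conjugation identity ${}^u[v, w] = [{}^u v, {}^u w]$ gives
$$[z_{ij}(d, c'), t_{kl}(c)] = {}^{t_{ij}(c')}\bigl[t_{ji}(d),\, {}^{t_{ij}(-c')} t_{kl}(c)\bigr].$$
Normality of $E(n, R, C)$ in $E(n, R)$ places ${}^{t_{ij}(-c')} t_{kl}(c) \in E(n, R, C)$, while $t_{ji}(d) \in E(n, A\circ B)$. Hence the inner bracket lies in $[E(n, A\circ B), E(n, R, C)] \le [E(n, R, A\circ B), E(n, R, C)] = N$ (the equality by Lemma~5), and normality of $N$ in $E(n, R)$ absorbs the outer conjugation.

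For the $y$-generators, I would split into cases on the size of $\{k, l\} \cap \{i, j\}$. Empty overlap is possible only when $n \ge 4$, and then $t_{kl}(c)$ commutes by Steinberg with both $t_{ij}(a)$ and $t_{ji}(b)$, so $[y_{ij}(a, b), t_{kl}(c)] = e$; for $n = 3$ this case is vacuous. If the overlap has size one, I would invoke the Hall--Witt identity
$$[[x, y^{-1}], z]^y \cdot [[y, z^{-1}], x]^z \cdot [[z, x^{-1}], y]^x = 1$$
with $x = t_{ij}(a)$, $y = t_{ji}(-b)$, $z = t_{kl}(c)$. In each of the four resulting sub-cases the inner brackets $[t_{ji}(-b), t_{kl}(-c)]$ and $[t_{kl}(c), t_{ij}(-a)]$ collapse, by Steinberg, to the identity or to a single transvection of the form $t_{pq}(\pm bc)$, $t_{pq}(\pm cb)$, $t_{pq}(\pm ac)$, or $t_{pq}(\pm ca)$. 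Feeding these back expresses $[y_{ij}(a, b), t_{kl}(c)]$ as a conjugate by an element of $E(n, R)$ of a single transvection $t_{pq}(d)$ with $d$ a triple product of $a, b, c$, hence $d \in (A\circ B)\circ C$; by Lemma~3 one has $t_{pq}(d) \in E(n, R, (A\circ B)\circ C) \le N$, and normality delivers the conjugate to $N$.

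The genuinely delicate sub-case is full overlap $\{k, l\} = \{i, j\}$, where the analogous Hall--Witt substitution reintroduces the non-elementary commutator $y_{ji}(-b, -c) = [t_{ji}(-b), t_{ij}(-c)]$, sitting in $[E(n, B), E(n, C)]$ but not visibly in $N$. My plan here is to exploit $n \ge 3$ by choosing a third index $k' \notin \{i, j\}$ and rewriting one of the three entering transvections---for example $t_{ji}(b) = [t_{jk'}(b), t_{k'i}(1)]$---via a Steinberg decomposition, then re-applying Hall--Witt so that all auxiliary commutators drop into the partial-overlap regime already handled. I expect this to be the main obstacle: every Hall--Witt reduction produces three interrelated terms, and the bookkeeping must verify that each new piece carries a genuine $C$-factor, so that it lands inside $E(n, R, (A\circ B)\circ C) \le N$ rather than only inside the larger subgroup $E(n, R, A\circ B)$ that Lemma~6 alone would supply.
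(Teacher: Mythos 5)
Your reduction to generators (via Lemmas~1, 4, 5), your treatment of the $z$-generators, and your handling of the empty and size-one overlaps are all sound; in the size-one case your Hall--Witt route is a legitimate alternative to the paper's explicit matrix formulas for $[y_{ij}(a,b),t_{kl}(c)]$, and it lands on the same single transvections with parameters in $ABC$, $BAC$, $CAB$, $CBA\subseteq (A\circ B)\circ C$. But the full-overlap case $\{k,l\}=\{i,j\}$ is exactly the crux of the paper's proof of this lemma, and you leave it as an unexecuted plan. That is a genuine gap, and your proposed route --- decomposing $t_{ji}(b)=[t_{jk'}(b),t_{k'i}(1)]$ inside $y_{ij}(a,b)$ and re-applying Hall--Witt --- points in the wrong direction: the problematic piece is the one carrying no $C$-factor, so what must be decomposed is the $C$-side transvection, not one of the transvections building $y_{ij}(a,b)$.

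The paper's resolution is the following. Take $h\neq i,j$ and write $t_{ij}(c)=[t_{ih}(c),t_{hj}(1)]$. Then
$$ [y_{ij}(a,b),t_{ij}(c)]={}^{y_{ij}(a,b)}[t_{ih}(c),t_{hj}(1)]\cdot t_{ij}(-c)
=[t_{ih}(c)u,\;t_{hj}(1)v]\cdot t_{ij}(-c), $$
where by the size-one-overlap formulas $u\in E(n,ABC)$ and $v=t_{hi}(-bab)t_{hj}(ba+baba)\in E(n,BA)$. The factor $u$ lies in $E(n,(A\circ B)\circ C)\le N$ and may be discarded modulo $N$; then multiplicativity in the second argument gives
$[t_{ih}(c),t_{hj}(1)]\cdot{}^{t_{hj}(1)}[t_{ih}(c),v]\cdot t_{ij}(-c)$, and after cancelling $[t_{ih}(c),t_{hj}(1)]=t_{ij}(c)$ against $t_{ij}(-c)$ one is left with a conjugate of $[t_{ih}(c),v]\in[E(n,C),E(n,A\circ B)]=N$. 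The decisive point --- which your closing sentence correctly worries about but does not settle --- is that the $C$-free element $v$ survives only \emph{inside a commutator against the genuine $C$-element} $t_{ih}(c)$, so it contributes to $[E(n,A\circ B),E(n,C)]$ rather than merely to $E(n,R,A\circ B)$. Without this step your argument establishes the lemma only away from the positions $(i,j)$ and $(j,i)$, i.e.\ not at all.
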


\begin{lemma}
Let $R$ be an associative ring with $1$, $n\ge 4$, and let $A,B,C,D$ 
be two-sided ideals of $R$. Then
$$ \big[\big[E(n,A),E(n,B)\big],\big[E(n,C),E(n,D)\big]\big]=
\big[E(n,A\circ B),E(n,C\circ D)\big]. $$
\end{lemma}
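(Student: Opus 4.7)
The inclusion $\supseteq$ is immediate from Lemma~2: $E(n,A\circ B)\le E(n,R,A\circ B)\le[E(n,A),E(n,B)]$ and similarly for $C\circ D$, so $[E(n,A\circ B),E(n,C\circ D)]$ is contained in the double commutator on the right.

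For the reverse inclusion, put $F=[E(n,A),E(n,B)]$, $G=[E(n,C),E(n,D)]$, $I=A\circ B$, $J=C\circ D$, and $N=[E(n,I),E(n,J)]$. By Lemma~4, $N=[E(n,R,I),E(n,R,J)]$ is normal in $E(n,R)$, so Lemma~1 reduces the goal $[F,G]\le N$ to a verification on pairs of generators. By Lemma~3, $F$ is generated by the Stein--Tits--Vaserstein elements $z_{ij}(u,c)$, $u\in I$, $c\in R$, which lie in $E(n,R,I)$, together with the elementary commutators $y_{12}(a,b)$ for which one may fix the pair $(1,2)$. Since $n\ge 4$, a second application of Lemma~3 to $G$ allows us to fix the disjoint pair $(3,4)$, so the generators of $G$ are $z_{kl}(v,c')\in E(n,R,J)$ together with $y_{34}(c,d)$.

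Four types of generator pairs arise. In the $z,z$ case, $[z_{ij}(u,c),z_{kl}(v,c')]\in[E(n,R,I),E(n,R,J)]=N$ by Lemma~4. In the $y,y$ case the matrices $y_{12}(a,b)$ and $y_{34}(c,d)$ are supported on the disjoint index sets $\{1,2\}$ and $\{3,4\}$ and agree with the identity outside these blocks, so they commute and $[y_{12}(a,b),y_{34}(c,d)]=1$. For the two mixed cases it suffices, by symmetry, to treat $\alpha=y_{12}(a,b)\in F$ and $\beta\in E(n,R,J)$, i.e.\ to establish $[F,E(n,R,J)]\le N$. Since $E(n,R,J)$ is the normal closure of $E(n,J)$ in $E(n,R)$, any such $\beta$ is a word in $E(n,R)$-conjugates ${}^g z$ with $z\in E(n,J)$; the identity $[f,{}^g z]={}^g[{}^{g^{-1}}\!f,z]$ together with normality of $F$ in $E(n,R)$ reduces the claim to $[F,E(n,J)]\le N$. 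This inclusion is immediate from the triple commutator Lemma~7: $[F,E(n,J)]=[[E(n,A),E(n,B)],E(n,J)]=[E(n,A\circ B),E(n,J)]=N$, and then normality of $N$ in $E(n,R)$ takes care of the outer conjugation by $g$.

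The crux of the argument is the mixed case. Lemma~5 on its own only delivers the much weaker estimate $[F,E(n,R)]=E(n,R,I)$, which is generally far larger than $N$; the replacement of $E(n,R,I)$ by $N$ is precisely what the triple commutator Lemma~7 provides, so the quadruple result rests squarely on the triple one. The rank hypothesis $n\ge 4$, by contrast, is used for a completely separate reason, namely to invoke Lemma~3 twice with companion index pairs $(1,2)$ and $(3,4)$ that are disjoint, forcing the $y,y$-commutator to vanish on the nose.
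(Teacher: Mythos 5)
Your proof is correct and follows essentially the same route as the paper: both arguments reduce to generators, dispose of the $z$-type and mixed pairs via the triple commutator Lemma~7, and kill the $y,y$ pair by placing the two elementary commutators on disjoint index sets, which is exactly where $n\ge 4$ enters --- the paper achieves disjointness by shifting indices with Lemma~11 modulo $E(n,R,C\circ D)$, while you invoke the ``fix one pair of indices'' clause of the generation lemma, which rests on the same index-shifting. The only correction needed is that your lemma references are shifted by one relative to the paper's numbering (the generation lemma is Lemma~4, normality of $[E(n,A),E(n,B)]$ follows from Lemma~5, and the level computation is Lemma~3).
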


The proofs of Lemma 3 is rather tricky (and a proof of Lemma 8 for
$n=3$ would be even much trickier than that), and will be postponed 
to the following sections, which constitute the technical core of the 
paper. However, it is very easy to see that Theorem 1 immediately follows.

\begin{proof}
Denote the commutator on the left-hand side by $H$,
$$ H=[\![E(n,I_1),E(n,I_2),\ldots,E(n,I_m)]\!]. $$
\noindent
We argue by induction in $m$, with the cases $m\le 4$ as the
base of induction --- for the case $m=2$ there is nothing to
prove, case $m=3$ is accounted for by Lemma 7, and case
$m=4$ --- by Lemma 7, if the cut point $s\neq 2$, and by 
Lemma 8 when $s=2$.
\par
Now, let $m\ge 5$ and assume that our theorem is already 
proven for all shorter commutators. Consider an arbitrary 
arrangment of brackets\/ $[\![\ldots]\!]$ with the cut point 
$s$ and let 
$$ [\![E(n,I_1),E(n,I_2),\ldots,E(n,I_s)]\!],\qquad
[\![E(n,I_{s+1}),E(n,I_{s+2}),\ldots,E(n,I_m)]\!], $$
\noindent
be the partial commutators, the first one containing the factors 
afore the cut point, and the second one containing those after
the cut point. 
\par\smallskip
$\bullet$ When the cut point occurs at $s=1$ or at $s=m-1$, one 
of these commutators is a single elementary subgroup $E(n,I_1)$ 
in the first case or $E(n,I_{m-1})$ in the second one.  Then we 
can apply the induction hypothesis to another factor. For $s=1$,
denote by $t=2,\ldots,m-1$ the cut point of the second factor.
Then by induction hypothesis
\begin{multline*}
H=[E(n,I_1),[\![E(n,I_2),E(n,I_3),\ldots,E(n,I_m)]\!]]=\\
[E(n,I_1),[E(n,I_1\circ\ldots\circ I_t),
E(n,I_{t+1}\circ\ldots\circ I_m)]], 
\end{multline*}
\noindent
and we are done by Lemma 7. Similarly, for $s=m-1$ denote by 
$r=1,\ldots,m-1$ the cut point of the first factor. Then by 
induction hypothesis
\begin{multline*}
H=[[\![E(n,I_1),E(n,I_2),\ldots,E(n,I_{m-1})]\!],E(n,I_m)]=\\
[[E(n,I_1\circ\ldots\circ I_r),
E(n,I_{r+1}\circ\ldots\circ I_{m-1}),E(n,I_m)], 
\end{multline*}
\noindent
and we are again done by Lemma 7. 
\par\smallskip
$\bullet$ Otherwise, when $s\neq 1,m-1$, we can apply the 
induction hypothesis to both factors. Let as above $r=1,\ldots,s-1$
be the cut point of the first factor and let $t=s+1,\ldots,m-1$ be
the cut point of the second factor. Then we can apply induction
hypothesis to both factors of
$$ H=[[\![E(n,I_1),E(n,I_2),\ldots,E(n,I_s)]\!],
[\![E(n,I_{s+1}),E(n,I_{s+2}),\ldots,E(n,I_m)]\!]] $$
\noindent
to conclude that
$$ H=[[E(n,I_1\circ\ldots\circ I_r),E(n,I_{r+1}\circ\ldots\circ I_s)],
[E(n,I_{s+1}\circ\ldots\circ I_t),E(n,I_{t+1}\circ\ldots\circ I_m)]], $$
\noindent
and we are again done, this time by Lemma 8.
\end{proof}


\section{Elementary commutators modulo $E(n,R,A\circ B)$}

Our proofs of Lemma 7 and Lemma 8 are refinements of the
calculations inside the proof of \cite{NZ2}, Theorem 1, namely
the proofs of Lemmas 3 and 5 thereof (Lemma 4 establishes
that the third type of generators used in \cite{Hazrat_Zhang_multiple,
Hazrat_Vavilov_Zhang} are redundant). For convenience of the 
reader, we reproduce these lemmas in our notation, and in 
somewhat more precise form we use throughout the present 
paper\footnote{Besides, we need these more general congruences 
also to compute some double elementary commutator subgroups
in \S~8.}.

\begin{lemma}
Let $R$ be an associative ring with $1$, $n\ge 3$, and let $A,B$ 
be two-sided ideals of $R$. Then for any  $1\le i\neq j\le n$, 
$a\in A$, $b\in B$, and any $x\in E(n,R)$ one has
$$ {}^x y_{ij}(a,b)\equiv  y_{ij}(a,b) \pamod{E(n,R,A\circ B)}. $$
\end{lemma}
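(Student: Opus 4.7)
The plan is to reduce the statement to the case where $x$ is a single elementary transvection, and then to verify it by a case analysis on how the index pair $\{k,l\}$ of $x=t_{kl}(c)$ meets $\{i,j\}$. Set
$$ \Sigma = \bigl\{x \in E(n,R) : [x, y_{ij}(a,b)] \in E(n,R, A\circ B)\bigr\}. $$
The normality of $E(n,R, A\circ B)$ in $E(n,R)$ together with the identities $[xy,z] = {}^x[y,z]\cdot [x,z]$ and $[x^{-1}, z] = {}^{x^{-1}}[z,x]$ show that $\Sigma$ is a subgroup of $E(n,R)$. Since $E(n,R)$ is generated by the elementary transvections $t_{kl}(c)$, it suffices to prove $t_{kl}(c) \in \Sigma$ for all admissible $k,l,c$.

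For each such generator I would expand
$$ {}^{t_{kl}(c)} y_{ij}(a,b) = \bigl[{}^{t_{kl}(c)} t_{ij}(a),\ {}^{t_{kl}(c)} t_{ji}(b)\bigr] $$
and apply Chevalley commutator formulas, splitting into cases according to $\sigma := |\{k,l\} \cap \{i,j\}|$. If $\sigma = 0$ (possible only when $n\ge 4$), both inner conjugates are fixed by Steinberg and the claim is immediate. If $\sigma = 1$, exactly one of the inner factors is fixed while the other becomes a product $t_{ij}(a)\cdot t_{pq}(d)$ or $t_{ji}(b)\cdot t_{pq}(d)$ with $d$ one of $\pm ca, \pm ac, \pm cb, \pm bc$; expanding the outer commutator by multiplicativity and re-applying Steinberg produces transvections whose entries are of the form $\pm abc$, $\pm cab$, $\pm bac$, or $\pm cba$, each of which lies in $AB + BA = A\circ B$ and hence in $E(n, A\circ B) \le E(n, R, A\circ B)$. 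What remains is a conjugate ${}^{t_{pq}(d)} y_{ij}(a,b)$ whose index set still meets $\{i,j\}$ in exactly one element; one further application of the same procedure (the iteration decreases the residual complexity) closes this case.

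The main obstacle is $\sigma = 2$, i.e.\ $\{k,l\} = \{i,j\}$, when $t_{kl}(c)$ lies in the same $\SL_2$-block as $y_{ij}(a,b)$ and Steinberg no longer decouples anything. To handle it I would exploit $n\ge 3$ by picking an auxiliary index $h \notin \{i,j\}$ and writing $t_{ij}(c) = [t_{ih}(1), t_{hj}(c)]$ (and symmetrically $t_{ji}(c) = [t_{jh}(1), t_{hi}(c)]$) via the Steinberg identity. The Hall--Witt identity then expresses $[t_{ij}(c), y_{ij}(a,b)] = [[t_{ih}(1), t_{hj}(c)], y_{ij}(a,b)]$ as a product of conjugates of the triple commutators $[[t_{hj}(c), y_{ij}(a,b)], t_{ih}(1)]$ and $[[y_{ij}(a,b), t_{ih}(1)], t_{hj}(c)]$. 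The inner commutators $[t_{hj}(c), y_{ij}(a,b)]$ and $[t_{ih}(1), y_{ij}(a,b)]$ already lie in $E(n, R, A\circ B)$ by the $\sigma = 1$ case, since each of the index sets $\{h,j\}$, $\{i,h\}$ meets $\{i,j\}$ in exactly one element; by normality of $E(n, R, A\circ B)$ in $E(n,R)$, the further commutators and conjugations remain in $E(n, R, A\circ B)$, and the claim follows. Throughout the argument, the noncommutativity of $R$ forces meticulous tracking of the order of scalars in the $\sigma = 1$ expansion — distinguishing $ab$ from $ba$ so that the symmetrised ideal $A\circ B = AB + BA$ (rather than just one summand) absorbs every correction term — which is the chief technical nuisance, but no tool beyond the Steinberg relations and the normality of $E(n, R, A\circ B)$ is needed.
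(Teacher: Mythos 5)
Your proposal follows the same skeleton as the paper's proof: reduce to the case where $x$ is a single elementary generator (you do this via the subgroup $\Sigma$, the paper by induction on the word length of $x$ — equivalent, since both rest on the normality of $E(n,R,A\circ B)$ and multiplicativity of commutators), then split into cases according to $|\{k,l\}\cap\{i,j\}|$, with the full-overlap case reduced to the one-overlap case by writing $t_{ij}(c)$ as a Steinberg commutator through an auxiliary index $h$. The executions differ in the one-overlap case: the paper computes the explicit $2\times 2$ block form of $y_{ij}(a,b)$ and $y_{ij}(a,b)^{-1}$ and reads off $[t_{kl}(c),y_{ij}(a,b)]$ in closed form as a product of two transvections with entries in $ABC$, $BAC$, $CAB$ or $CBA$; you instead conjugate the two arguments of $y_{ij}(a,b)$ and re-expand, which leaves behind a residual conjugate ${}^{t_{pq}(d)}y_{ij}(a,b)$ and forces an iteration. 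That iteration does terminate (e.g.\ starting from $t_{ih}(c)$ the successive conjugators are $t_{jh}(-bc)$ and then $t_{ih}(abc)$, whose parameter lies in $A\circ B$ so that normality finishes; starting from $t_{hi}(c)$ one passes through $t_{hj}(ca)$ and then $t_{hi}(cab)$), but your phrase ``the iteration decreases the residual complexity'' is the one point that needs to be made precise — you should exhibit the two or three explicit rounds and observe that the final conjugator has parameter in $AB+BA$. Your Hall--Witt treatment of the case $\{k,l\}=\{i,j\}$ is valid but unnecessary: since $\Sigma$ is a subgroup and $t_{ij}(c)=t_{ih}(1)t_{hj}(c)t_{ih}(-1)t_{hj}(-c)$ is a product of four generators already shown to lie in $\Sigma$, membership is automatic, which is in substance what the paper does. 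The trade-off is that the paper's route requires one honest matrix computation but then every correction term is visible at once, whereas yours avoids computing $y_{ij}(a,b)$ explicitly at the price of an iteration whose termination must be tracked.
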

\begin{proof}
Clearly, $y_{ij}(a,b)$ resides in the image of
the fundamental embedding of $E(2,R)$ into $E(n,R)$ in the
$i$-th and $j$-th rows and columns, where one has
\par\smallskip
$$ y_{ij}(a,b)=\left[\begin{pmatrix} 1&a\\ 0&1\end{pmatrix},
\begin{pmatrix} 1&0\\ b&1\end{pmatrix}\right]=
\begin{pmatrix} 1+ab+abab&-aba\\ bab&1-ba\end{pmatrix} $$
\noindent
and 
$$ y_{ij}(a,b)^{-1}=
\left[\begin{pmatrix} 1&0\\ b&1\end{pmatrix},
\begin{pmatrix} 1&a\\ 0&1\end{pmatrix}\right]=
\begin{pmatrix} 1-ab&aba\\ -bab&1+ba+baba\end{pmatrix}. $$
\par\smallskip
Consider the elementary conjugate ${}^xy_{ij}(a,b)$. We argue by induction on the length of $x\in E(n,R)$ in elementary generators. 
Let $x=yt_{kl}(c)$, where $y\in E(n,R)$ is shorter than $x$, whereas
$1\le k\neq l\le n$, $c\in R$. 
\par\smallskip
$\bullet$ If $k,l\neq i,j$, then $t_{kl}(c)$ commutes with $y_{ij}(a,b)$ 
and can be discarded.
\par\smallskip
$\bullet$ On the other hand, for any $h\neq i,j$ the above formulas 
for $y_{ij}(a,b)$ and $y_{ij}(a,b)^{-1}$ immediately imply that
\begin{align*}
&[t_{ih}(c),y_{ij}(a,b)]=t_{ih}(-abc-ababc)t_{jh}(-babc),\\
&[t_{jh}(c),y_{ij}(a,b)]=t_{ih}(abac)t_{jh}(bac),\\
&[t_{hi}(c),y_{ij}(a,b)]=t_{hi}(cab)t_{hj}(-caba),\\
&[t_{hj}(c),y_{ij}(a,b)]=t_{hi}(cbab)t_{hj}(-cba-cbaba).
\end{align*}
\par\noindent
All factors on the right hand side belong already to $E(n,A\circ B)$
This means that
$$ {}^xy_{ij}(a,b)\equiv {}^yy_{ij}(a,b) \pamod{E(n,R,A\circ B)}. $$
\par\smallskip
$\bullet$ Finally, for $(k,l)=(i,j),(j,i)$ we can take an $h\neq i,j$
and rewrite $t_{kl}(c)$ as a commutator $t_{ij}(c)=[t_{ih}(c),t_{hj}(1)]$
or $t_{ji}(c)=[t_{h}(c),t_{hi}(1)]$ and apply the previous item to
get the same congruence modulo $E(n,R,A\circ B)$.
\par\smallskip
By induction we get that ${}^xy_{ij}(a,b)\equiv y_{ij}(a,b)\pamod{E(n,R,A\circ B)}$, as claimed.
\end{proof}

Lemma 9 immediately implies the following additivity property of
the elementary commutators with respect to its arguments.

\begin{lemma}
Let $R$ be an associative ring with $1$, $n\ge 3$, and let $A,B$ 
be two-sided ideals of $R$. Then for any  $1\le i\neq j\le n$, 
$a,a_1,a_2\in A$, $b,b_1,b_2\in B$ one has
\begin{align*}
&y_{ij}(a_1+a_2,b)\equiv  y_{ij}(a_1,b)\cdot y_{ij}(a_1,b) 
\pamod{E(n,R,A\circ B)},\\
&y_{ij}(a,b_1+b_2)\equiv  y_{ij}(a,b_1)\cdot y_{ij}(a,b_2) 
\pamod{E(n,R,A\circ B)},\\
&y_{ij}(a,b)^{-1}\equiv  y_{ij}(-a,b)\equiv y_{ij}(a,-b) 
\pamod{E(n,R,A\circ B)},\\
&y_{ij}(ab_1,b_2)\equiv y_{ij}(a_1,a_2b)\equiv e
\pamod{E(n,R,A\circ B)}.
\end{align*}
\end{lemma}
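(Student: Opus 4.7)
The plan is to read all four congruences off Lemma 9 (centrality of $y_{ij}(a,b)$ modulo $E(n,R,A\circ B)$), combined with the standard commutator identities recalled in \S~2 and the normality of $E(n,R,A\circ B)$ in $E(n,R)$. No new matrix calculation should be required.

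For the first additivity statement, I would expand $t_{ij}(a_1+a_2)=t_{ij}(a_1)\,t_{ij}(a_2)$ and apply the multiplicativity identity $[xy,z]={}^x[y,z]\cdot[x,z]$ with $x=t_{ij}(a_1)$, $y=t_{ij}(a_2)$, $z=t_{ji}(b)$, obtaining
$$ y_{ij}(a_1+a_2,b)={}^{t_{ij}(a_1)}y_{ij}(a_2,b)\cdot y_{ij}(a_1,b). $$
Lemma 9 absorbs the conjugation by $t_{ij}(a_1)$, leaving $y_{ij}(a_2,b)\cdot y_{ij}(a_1,b)$ modulo $E(n,R,A\circ B)$. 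To match the order stated in the lemma, I invoke Lemma 9 once more with $x=y_{ij}(a_1,b)\in E(n,R)$, which says that the two factors $y_{ij}(a_1,b)$ and $y_{ij}(a_2,b)$ commute modulo $E(n,R,A\circ B)$. Item 2 is the mirror computation, using $[x,yz]=[x,y]\cdot{}^y[x,z]$ on the factorisation $t_{ji}(b_1+b_2)=t_{ji}(b_1)\,t_{ji}(b_2)$.

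Item 3 is then a formal corollary: specialising item 1 to $a_2=-a_1=-a$ gives $y_{ij}(a,b)\cdot y_{ij}(-a,b)\equiv y_{ij}(0,b)=e$ modulo $E(n,R,A\circ B)$, and the second congruence drops out of item 2 in the same way. For item 4, the observation is that $ab_1\in AB\subseteq A\circ B$, so that $t_{ij}(ab_1)\in E(n,A\circ B)\subseteq E(n,R,A\circ B)$; since $E(n,R,A\circ B)\unlhd E(n,R)$, the commutator
$$ y_{ij}(ab_1,b_2)=[t_{ij}(ab_1),t_{ji}(b_2)] $$
already lies in $E(n,R,A\circ B)$. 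The second half of item 4 is handled identically, with $t_{ji}(a_2b)\in E(n,R,A\circ B)$ because $a_2b\in AB\subseteq A\circ B$.

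I do not expect a genuine obstacle: the entire lemma is a formal consequence of Lemma 9. The only point demanding care is keeping track of the order of factors and of conjugations, since $R$ is non-commutative; this is precisely where Lemma 9 is invoked twice in item 1 (once to discard a conjugation, once to permute factors). Everything else is routine.
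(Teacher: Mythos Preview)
Your proof is correct and follows essentially the same route as the paper: expand via additivity of $t_{ij}$, apply multiplicativity of commutators, and invoke Lemma~9 to remove the conjugation (with items~2--4 handled exactly as you describe). You are in fact slightly more careful than the paper in noting that a second application of Lemma~9 is needed to commute the two factors into the stated order, but otherwise the arguments coincide.
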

\begin{proof}
The first item can be derived from Lemma~9 as follows. By
definition
$$ y_{ij}(a_1+a_2,b)=[t_{ij}(a_1+a_2),t_{ji}(b)]=
[t_{ij}(a_1)t_{ij}(a_2),t_{ji}(b)], $$
\noindent
and it only remains to apply multiplicativity of commutators in 
the first factor, and then Lemma 9. The second item is similar, 
and the third item follows. The last item is obvious from the 
definition.
\end{proof}

The following result is a stronger, and more precise version of
\cite{NZ2}, Lemma 5.

\begin{lemma}
Let $R$ be an associative ring with $1$, $n\ge 3$, and let $A,B$ 
be two-sided ideals of $R$. Then for any  $1\le i\neq j\le n$, any
$1\le k\neq l\le n$, and all $a\in A$, $b\in B$, $c\in R$, one has
$$ y_{ij}(ac,b)\equiv  y_{kl}(a,cb) \pamod{E(n,R,A\circ B)}. $$
\end{lemma}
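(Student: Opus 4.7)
My plan is to prove the congruence in two steps: first a ``transfer'' identity that shifts the factor $c$ from the first argument to the second at the cost of relabelling one index, and then an ``index-independence'' statement saying that modulo $E(n, R, A \circ B)$ the value of a $y$-generator does not depend on the chosen pair of indices.

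For the transfer step, I would pick any auxiliary index $h \neq i, j$ (which exists because $n \ge 3$) and decompose $t_{ij}(ac) = [t_{ih}(a), t_{hj}(c)]$, so that with $x = t_{ih}(a)$, $y = t_{hj}(c)$, $z = t_{ji}(b)$ we have $y_{ij}(ac, b) = [[x, y], z]$. Now apply the Hall--Witt identity
$$ {}^y[[x, y^{-1}], z] \cdot {}^z[[y, z^{-1}], x] \cdot {}^x[[z, x^{-1}], y] = 1. $$
A direct Chevalley commutator computation gives $[[x, y^{-1}], z] = y_{ij}(-ac, b)$, $[[y, z^{-1}], x] = y_{hi}(-cb, a)$, and $[z, x^{-1}] = t_{jh}(-ba)$. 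The last of these lies in $E(n, BA) \subseteq E(n, A \circ B) \subseteq E(n, R, A \circ B)$, so by normality $[[z, x^{-1}], y] \in E(n, R, A \circ B)$ and the third Hall--Witt factor may be discarded. Peeling off the outer conjugations by Lemma~9 and cleaning up signs via Lemma~10 -- in particular using $y_{hi}(-cb, a) = y_{ih}(a, -cb)^{-1} \equiv y_{ih}(a, cb)$ -- one obtains
$$ y_{ij}(ac, b) \equiv y_{ih}(a, cb) \pamod{E(n, R, A \circ B)}. $$

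For the index-independence step, specialising the transfer identity to $c = 1$ yields $y_{ij}(a, b) \equiv y_{ih}(a, b)$, which realises any change of the second index (to any $h \neq i, j$). The dual change of the first index is obtained by running the same Hall--Witt manoeuvre on $y_{ij}(a, bd)$, this time using the decomposition $t_{ji}(bd) = [t_{jh}(b), t_{hi}(d)]$; now the ``small'' Hall--Witt factor is the one involving $[x', (y')^{-1}] = t_{ih}(-ab) \in E(n, AB) \subseteq E(n, R, A \circ B)$, and specialising $d = 1$ in the resulting congruence $y_{ij}(a, bd) \equiv y_{hj}(da, b)$ gives $y_{ij}(a, b) \equiv y_{hj}(a, b)$. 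Chaining these two moves, any $(i, j)$ with $i \neq j$ can be routed to any other pair $(k, l)$ with $k \neq l$ through a third index, since $n \ge 3$. Combined with step one applied to $a' = a$, $b' = cb$, this chain yields $y_{ij}(ac, b) \equiv y_{ih}(a, cb) \equiv y_{kl}(a, cb)$, completing the proof.

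The main obstacle is the bookkeeping inside the Hall--Witt identity: one must correctly identify, in each of the two decompositions, exactly which of the three Hall--Witt factors is ``small'' (already absorbed into $E(n, R, A \circ B)$), and then reassemble the two surviving factors while tracking the signs supplied by Lemma~10 and the index swap $y_{ji}(b, a) = y_{ij}(a, b)^{-1}$ needed to recognise the output as a $y$-generator with first entry in $A$ and second in $B$.
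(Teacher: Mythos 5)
Your strategy is sound and in substance coincides with the paper's: the paper's own proof performs exactly the Hall--Witt manoeuvre by hand, rewriting $y_{ij}(ac,b)=t_{ij}(ac)\cdot{}^{t_{ji}(b)}[t_{ih}(a),t_{hj}(-c)]$, pushing the conjugation inside, discarding the cross term $t_{jh}(ba)\in E(n,BA)$, and recombining via Lemma~9; your dual move with $t_{ji}(bd)=[t_{jh}(b),t_{hi}(d)]$ and the negligible factor $t_{ih}(-ab)\in E(n,AB)$, and the final connectivity argument for $n\ge 3$, likewise match the paper. The one genuine defect is the form of the Hall--Witt identity you invoke: with the paper's conventions $[x,y]=xyx^{-1}y^{-1}$ and ${}^yx=yxy^{-1}$, the identity
$$ {}^y[[x,y^{-1}],z]\cdot{}^z[[y,z^{-1}],x]\cdot{}^x[[z,x^{-1}],y]=1 $$
is \emph{false} as a group identity (a direct check with $x=t_{13}(1)$, $y=t_{32}(1)$, $z=t_{21}(1)$ in $\SL(3,\Int)$ gives a product whose first row is $(1,-2,2)$). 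You have transplanted the inner commutators from the right-action convention $[x,y]=x^{-1}y^{-1}xy$, $x^y=y^{-1}xy$ without adjusting them. The correct left-convention form is
$$ {}^y[[y^{-1},x],z]\cdot{}^z[[z^{-1},y],x]\cdot{}^x[[x^{-1},z],y]=1, $$
and with it your argument closes even more cleanly: for $x=t_{ih}(a)$, $y=t_{hj}(c)$, $z=t_{ji}(b)$ one gets $[[y^{-1},x],z]=y_{ij}(ac,b)$, $[[z^{-1},y],x]=y_{ih}(a,cb)^{-1}$, and $[x^{-1},z]=t_{jh}(ba)\in E(n,BA)$, so after discarding the third factor and stripping conjugations by Lemma~9 one reads off $y_{ij}(ac,b)\equiv y_{ih}(a,cb)\pamod{E(n,R,A\circ B)}$ directly, without even needing the sign clean-up from Lemma~10. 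With that correction the proof is complete.
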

\begin{proof}
First, we show that we can move the second index of an
elementary commutator. With this end, we take any $ h\neq i,j$ 
and rewrite the elementary commutator
$ y_{ij}(ac,b)=\big[t_{ij}(ac),t_{ji}(b)\big]$ as
$$  y_{ij}(ac,b)=t_{ij}(ac)\cdot{}^{t_{ji}(b)}t_{ij}(-ac)=
t_{ij}(a)\cdot{}^{t_{ji}(b)}\big[t_{ih}(a),t_{hj}(-c)\big]. $$
\noindent
Expanding the conjugation by $t_{ji}(b)$, we see that 
$$  y_{ij}(ac,b)=t_{ij}(ac)\cdot\big[{}^{t_{ji}(b)}t_{ih}(a),{}^{t_{ji}(b)}t_{hj}(-c)\big] = 
t_{ij}(ac)\cdot[t_{jh}(ba)t_{ih}(a),t_{hj}(-c)t_{hi}(cb)\big]. $$
\noindent
Now, the first factor $t_{jh}(ba)$ of the first argument in this last commutator already belongs to the group $E(n,BA)$ which is 
contained in $E(n,R,A\circ B)$. Thus, as above, 
$$  y_{ij}(ac,b)\equiv  t_{ij}(a)\cdot[t_{ih}(a),t_{hj}(-c)t_{hi}(cb)\big] \pamod{E(n,R,A\circ B)}. $$
\noindent
Using multiplicativity of the commutator w.r.t. the second argument, cancelling the first two factors of the resulting 
expression, and then applying Lemma~9 we see that
$$  y_{ij}(ac,b)\equiv 
{}^{t_{hj}(-c)}\big[t_{ih}(a),t_{hi}(cb)\big] 
\equiv \big[t_{ih}(a),t_{hi}(cb)\big]\equiv y_{ih}(a,cb)
\pamod{E(n,R,A\circ B)}. $$
\par
Similarly, we can move the first index of an elementary commutator 
by rewriting the commutator $ y_{ij}(a,cb)$ differently, as
$$  y_{ij}(a,cb)=\big[t_{ij}(a),t_{ji}(cb)\big]=
{}^{t_{ij}(a)}t_{ji}(cb)\cdot t_{ji}(-cb)=
{}^{t_{ij}(a)}\big[t_{jh}(c),t_{hi}(b)\big]\cdot t_{ji}(-cb), $$
\noindent
we get the congruence 
$$  y_{ij}(a,cb)\equiv y_{hj}(ac,b) 
\pamod{E(n,R,A\circ B)}. $$
\par
Obviously, for $n\ge 3$ we can pass from any position $(i,j)$, 
$i\neq j$, to any other such position $(k,l)$, $k\neq l$, by a 
sequence of at most three such elementary moves, simultaneously 
moving $c$ from the first factor to the second one, or vice versa.
\end{proof}

Together with the previous results this lemma immediately
implies the following corollary, asserting that the elementary
commutators with parameters from $A^2$ or $B^2$ really
become elementary modulo $E(n,R,A\circ B)$.

\begin{lemma}
Let $R$ be an associative ring with $1$, $n\ge 3$, and let $A,B$ 
be two-sided ideals of $R$. Then for any  $1\le i\neq j\le n$, 
$a,a_1,a_2\in A$, $b,b_1,b_2\in B$ one has
$$ y_{ij}(a_1a_1,b)\equiv y_{ij}(a,b_1b_2)\equiv e
\pamod{E(n,R,A\circ B)}. $$
\end{lemma}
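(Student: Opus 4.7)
The plan is to deduce the statement as a short corollary of Lemma~11 together with the last item of Lemma~10. Lemma~11 tells us that, modulo $E(n,R,A\circ B)$, the elementary commutator $y_{ij}(\cdot,\cdot)$ depends only on the triple product of its first argument, an inserted factor from $R$, and its second argument: one may freely slide this middle factor across the commutator and simultaneously reshuffle the index pair. Lemma~10 already pins down as trivial, modulo $E(n,R,A\circ B)$, every elementary commutator with one of its arguments lying in $AB$. The task therefore reduces to bringing $y_{ij}(a_1a_2, b)$ and $y_{ij}(a, b_1b_2)$ into that form.

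For the first congruence I would regard $a_1a_2$ as the product $a_1\cdot a_2$ with $a_1\in A$ and $a_2\in R$, and invoke Lemma~11 to transport $a_2$ across to the right-hand argument. Taking $(k,l)=(i,j)$ this yields
$$y_{ij}(a_1a_2, b) \equiv y_{ij}(a_1, a_2b) \pamod{E(n,R,A\circ B)},$$
whose right-hand side has second argument $a_2 b\in AB$ and is thus covered by the last item of Lemma~10, so is $\equiv e$. The congruence $y_{ij}(a, b_1b_2)\equiv e$ is entirely symmetric: using the split $b_1\cdot b_2$ with $b_1\in R$ and $b_2\in B$, Lemma~11 read in reverse identifies $y_{ij}(a, b_1b_2)$ with $y_{ij}(ab_1, b_2)$, whose first argument now lies in $AB$, and Lemma~10 again finishes the job.

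I expect no substantive obstacle here; all the real content has already been absorbed into Lemmas~10 and~11, and in particular into the normality of $E(n,R,A\circ B)$ in $E(n,R)$ that powers the last item of Lemma~10. The only minor point needing any attention is the bookkeeping of indices forced by Lemma~11, but this is harmless since the congruence of Lemma~11 is stated for an arbitrary target pair $(k,l)$, so the original indices $(i,j)$ can always be recovered.
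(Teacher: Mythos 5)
Your argument is exactly the paper's: the paper derives the lemma by applying Lemma~11 to get $y_{ij}(a_1a_2,b)\equiv y_{ij}(a_1,a_2b)$ and $y_{ij}(a,b_1b_2)\equiv y_{ij}(ab_1,b_2)$, and then invokes the last item of Lemma~10 to conclude both are $\equiv e$ modulo $E(n,R,A\circ B)$. No discrepancies; the proposal is correct.
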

\begin{proof}
Indeed, by Lemmas 11  and 10 one has
\begin{align*}
&y_{ij}(a_1a_2,b)\equiv y_{ij}(a_1,a_2b)\equiv e
\pamod{E(n,R,A\circ B)},\\
&y_{ij}(a,b_1b_2)\equiv y_{ij}(ab_1,b_2)\equiv e
\pamod{E(n,R,A\circ B)}.
\end{align*}
\end{proof}


\section{Triple and quadruple commutators}

In this section we prove Lemma 7 and Lemma 8. We start with
Lemma 7.
\par\smallskip
First of all, let $1\le i\neq j\le n$, $1\le h\neq k\le n$, $a\in A$, 
$b\in B$, $c\in C$, $d\in R$ and $x\in E(n,R)$. Then, clearly,
$$ [{}^xz_{ij}(ab,d),t_{hk}(c)],  [{}^xz_{ij}(ba,d),t_{hk}(c)]\in 
[E(n,R,A\circ B),E(n,C)]. $$
\noindent
On the other hand, ${}^xy_{ij}(a,b)=y_{ij}(a,b)z$, for some 
$z\in E(n,R,A\circ B)$, and thus
$$ \big[{}^xy_{ij}(a,b),t_{hk}(c)\big]=
\big[y_{ij}(a,b)z,t_{hk}(c)\big]={}^{y_{ij}(a,b)}[z,t_{hk}(c)]\cdot
[y_{ij}(a,b),t_{hk}(c)]. $$ 
\noindent
The first of these commutators also belongs to
$[E(n,R,A\circ B),E(n,C)]$, and stays there after elementary
conjugations. Let's concentrate at the second one. The same 
analysis as in the proof of \cite{NZ2}, Lemma~3, shows that
\par\smallskip
$\bullet$ If $k,l\neq i,j$, then $t_{kl}(c)$ commutes with 
$y_{ij}(a,b)$.
\par\smallskip
$\bullet$ On the other hand, for any $h\neq i,j$ the formulas 
for $y_{ij}(a,b)$ and $y_{ij}(a,b)^{-1}$ given in the proof of 
Lemma~3 immediately imply that
\begin{align*}
&[y_{ij}(a,b),t_{ih}(c)]=t_{ih}(abc+ababc)t_{jh}(babc)\in E(n,ABC),\\
&[y_{ij}(a,b),t_{jh}(c)]=t_{ih}(-abac)t_{jh}(-bac)\in E(n,BAC),\\
&[y_{ij}(a,b),t_{hi}(c)]=t_{hi}(-cab)t_{hj}(caba)\in E(n,CAB),\\
&[y_{ij}(a,b),t_{hj}(c)]=t_{hi}(-cbab)t_{hj}(cba+cbaba)\in E(n,CBA).
\end{align*}
\par\noindent
All factors on the right hand side belong already to 
$E(n,(A\circ B)\circ C)$.
\par\smallskip
$\bullet$ Finally, for $(k,l)=(i,j),(j,i)$ we can take an $h\neq i,j$
and rewrite $t_{kl}(c)$ as a commutator 
$t_{ij}(c)=[t_{ih}(c),t_{hj}(1)]$ or $t_{ji}(c)=[t_{jh}(c),t_{hi}(1)]$.
Let us consider the first case, the second one is similar. One has
$$ z=[y_{ij}(a,b),t_{ij}(c)]=\big[y_{ij}(a,b),[t_{ih}(c),t_{hj}(1)]\big]=
{}^{y_{ij}(a,b)}[t_{ih}(c),t_{hj}(1)]\cdot t_{ij}(-c), $$
\noindent
shifting conjugation inside the commutator, we get
$$ z=\big[{}^{y_{ij}(a,b)}t_{ih}(c),{}^{y_{ij}(a,b)}t_{hj}(1)\big]
\cdot t_{ij}(-c) = [t_{ih}(c)u,t_{hj}(1)v]\cdot t_{ij}(-c), $$
\noindent
where, by the above (the second case corresponds to $c=1$),
 we have
 \begin{align*}
&u=t_{ih}(abc+ababc)t_{jh}(babc)\in E(n,ABC),\\
&v=t_{hi}(-bab)t_{hj}(ba+baba)\in E(n,BA). 
\end{align*}
\noindent
Clearly,
$$ u\in E(n,ABC)\le E(n,(A\circ B)\circ C)\le
\big[E(n,A\circ B), E(n,C)\big], $$
\noindent
and stays there under all elementary conjugations. Thus, 
$$ z\equiv [t_{ih}(c),t_{hj}(1)v]\cdot t_{ij}(-c)
\pamod{E(n,(A\circ B)\circ C)}. $$
\noindent
Using multiplicatibvity of the commutator on the right hand side
with respect to the second factor, we get
$$ z\equiv [t_{ih}(c),t_{hj}(1)]\cdot
{}^{t_{hj}(1)}[t_{ih}(c),v]\cdot t_{ij}(-c)
\pamod{E(n,(A\circ B)\circ C)}. $$
\noindent
However, the right hand side of the above congruence equals
$$ {}^{ t_{ij}(c)t_{hj}(1)}[t_{ih}(c),v]\in 
[E(n,R,A\circ B),E(n,C)], $$
\noindent
so that $z\in [E(n,R,A\circ B),E(n,C)]$, as claimed.

\par\smallskip
For $n\ge 4$ Lemma 8 already follows from Lemma 11 and
Lemma 7. 
\par\smallskip
From the previous lemma we already know that 
$$ \big[E(n,A\circ B),\big[E(n,C),E(n,D)\big]\big]=
\big[E(n,A\circ B),E(n,C\circ D)\big], $$
\noindent
and that
$$ \big[\big[E(n,A),E(n,B)\big],E(n,C\circ D)\big]=
\big[E(n,A\circ B),E(n,C\circ D)\big]. $$
\par
Thus, it only remains to prove that 
$$ [y_{ij}(a,b),y_{hk}(c,d)]\in \big[E(n,A\circ B),E(n,C\circ D)\big], $$
\noindent
for $1\le i\neq j\le n$, $1\le h\neq k\le n$, $a\in A$, 
$b\in B$, $c\in C$, $d\in D$. Conjugations by elements
$x\in E(n,R)$ do not matter, since they amount to extra
factors from the above triple commutators, which are
already accounted for.
\par
Now, for $n\ge 4$ this already finishes the proof, since in
this case by Lemma 11 we can move $y_{hk}(c,d)$ modulo 
$E(n,R,C\circ D)$ to a position, where it commutes with 
$y_{ij}(a,b)$. 
\par\smallskip
To extend this lemma also to the case $n=3$
one would need to perform calculations similar to, but much
fancier than the last item in the above proof of Lemma 7.
One starts with expanding the commutator $[y_{ij}(a,b),y_{jh}(c,d)]$
as above, but then it is not immediate to verify that the factors 
belong where they should.


\section{Final remarks}

Of course, the first question that immediately occurs is 
whether Theorem 1 holds also for $n=3$. We are slightly 
more inclined to believe in the positive answer.

\begin{problem}
Prove that Lemma $8$ and Theorem $1$ hold also for $n=3$.
\end{problem}

However, a negative solution of this problem would be also
very interesting and not easy to obtain, since a counter-example
should be higher-dimensional and very non-commutative. 
On the other hand, since the quotient $[E(n,I),E(n,I)]/E(n,R,I^2)$
is closely related to relative $K_2$, the rank two groups may be problematic here. Thus, even in the commutative case Matthias
Wendt constructed counter-examples to the centrality of 
$\St(3,R)\map E(3,R)$, see \cite{Wendt}.

\par\smallskip
One may ask, whether double commutators $[E(n,A),E(n,B)]$
can be reduced further, in other words, are themselves always
equal to $E(n,R,A\circ B)$. It is classically known this 
is not the case, even when $R$ is commutative, and even in the
stable range. Below we discuss some counter-examples.
\par
However, this is indeed the case in the important instance when
$A$ and $B$ are comaximal. Let us record another amusing 
corollary of \cite{NZ2}, Theorem~1. For quasi-finite rings this
result was established in \cite{Vavilov_Stepanov_revisited},
Theorem 5 and \cite{Hazrat_Vavilov_Zhang}, Theorem 2A, 
but for arbitrary associative rings it is new.

\begin{theorem}
Let $R$ be any associative ring with $1$, let $n\ge 3$, and let 
$A$ and $B$ be two-sided ideals of $R$. If $A$ and $B$ are
comaximal, $A+B=R$, then
$$ [E(n,A),E(n,B)]=E(n,R,A\circ B). $$
\end{theorem}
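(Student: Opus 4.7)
The plan is to establish the nontrivial inclusion $[E(n,A),E(n,B)] \subseteq E(n,R,A\circ B)$; the reverse inclusion is already contained in the sandwich of Lemma~3. My starting point will be the generation result of Lemma~4 together with Lemma~5: the commutator subgroup $[E(n,A),E(n,B)] = [E(n,R,A),E(n,R,B)]$ is generated, as a group, by the Stein--Tits--Vaserstein conjugates $z_{ij}(ab,c)$ and $z_{ij}(ba,c)$ and by the elementary commutators $y_{ij}(a,b)$, where $1 \le i \ne j \le n$, $a \in A$, $b \in B$, $c \in R$. It will therefore suffice to show that each such generator already lies in $E(n,R,A \circ B)$.

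For the $z$-generators nothing serious will be required. Since $ab, ba \in AB + BA = A \circ B$, the transvections $t_{ji}(ab)$ and $t_{ji}(ba)$ already lie in $E(n, A \circ B) \subseteq E(n, R, A \circ B)$, and conjugation by $t_{ij}(c) \in E(n, R)$ keeps us inside the normal closure $E(n, R, A \circ B)$. The genuine content of the theorem is therefore concentrated in the elementary commutators $y_{ij}(a,b)$, and this is where I expect the comaximality $A + B = R$ to enter.

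The idea is to fix a decomposition $1 = a_0 + b_0$ with $a_0 \in A$, $b_0 \in B$, and to expand $a = a \cdot 1 = a a_0 + a b_0$. By the additivity congruence of Lemma~10 this will yield
$$ y_{ij}(a, b) \equiv y_{ij}(a a_0, b) \cdot y_{ij}(a b_0, b) \pamod{E(n, R, A \circ B)}. $$
The first factor is of the form $y_{ij}(a_1 a_2, b)$ with $a_1 = a$ and $a_2 = a_0$ both in $A$, so by Lemma~12 it will be congruent to $e$ modulo $E(n, R, A \circ B)$. The second factor is of the form $y_{ij}(a b_1, b_2)$ with $a \in A$, $b_1 = b_0 \in B$ and $b_2 = b \in B$, so by the last item of Lemma~10 it too will be congruent to $e$ modulo $E(n, R, A \circ B)$. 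Hence $y_{ij}(a, b) \in E(n, R, A \circ B)$, and the argument will be complete.

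I do not anticipate any serious obstacle: the comaximality hypothesis is used exactly once, to absorb an arbitrary $a \in A$ into a sum of products landing in the subideals $A \cdot A$ and $A \cdot B$. These are precisely the two loci on which the congruences established in Section~5 are already sharp enough to force the corresponding elementary commutators to vanish modulo $E(n, R, A \circ B)$, so the whole proof reduces to spotting the right one-line decomposition of $a$.
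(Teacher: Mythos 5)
Your proposal is correct and follows essentially the same route as the paper: reduce to the generators of Lemma~4, dispose of the $z$-generators trivially, and kill each $y_{ij}(a,b)$ by writing $a=aa'+ab'$ with $a'+b'=1$ and invoking the additivity of Lemma~10 together with the vanishing congruences of Lemmas~10 and~12. The paper's own proof is just the one-line computation $y_{ij}(a,b)=y_{ij}(a(a'+b'),b)\equiv y_{ij}(aa',b)\cdot y_{ij}(ab',b)\equiv e \pmod{E(n,R,A\circ B)}$, with the reduction to generators left implicit.
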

\begin{proof}
Since $A$ and $B$ are comaximal, there exist $a'\in A$ and
$b'\in B$ such that $a'+b'=1\in R$. But then by Lemmas 10 and 12
one has
$$ y_{ij}(a,b)=y_{ij}(a(a'+b'),b)\equiv y_{ij}(aa',b)\cdot y_{ij}(ab',b)\equiv e\pamod{E(n,R,A\circ B)}. $$
\end{proof}

Lemma 12 implicates that the difference between the mutual 
commutator
subgroup $[E(n,A),E(n,B)]$ and the relative elementary subgroup $E(n,R,A\circ B)$ resides in $A/A^2$ and $B/B^2$. Thus, it is no
surprise that the simplest counter-examples should be of the form 
$[E(n,I),E(n,I)]>E(n,R,I^2)$. These examples show, in particular,
that the second type of generators in Lemma~4 are necessary,
and that in general double commutators cannot be further
reduced.
\par\smallskip
$\bullet$ The first counter-example was constructed by Alec
Mason and Wilson Stothers \cite{Mason_Stothers, Mason_commutators_2}. In their example $R=\Int[i]$ is the
ring of Gaussian integers, whereas $I$ is its prime ideal
$\prim=(1+i)R$. In general, when $I$ is an ideal of a Dedekind 
ring of arithmetic type, an explicit formula for $\SK_1(R,I)$ was 
obtained by Hyman Bass, John Milnor and Jean-Pierre Serre \cite{Bass_Milnor_Serre}. Now a straightforward calculation using 
this formula shows that
$$ E(n,\Int[i],\prim^6)<[E(n,\Int[i],\prim^3),E(n,\Int[i],\prim^3)]]<\SL(n,\Int[i],\prim^6), $$
\noindent
where {\it both\/} indices are equal to 2.
\par\smallskip
However, there are many further counter-examples which show 
that this phenomenon is of a very general nature. Several such 
counter-examples are discussed in the works of Suzan Geller and Charles Weibel, see, for instance, \cite{Geller_Weibel}, 
Example 6.1 and Example 7.  In fact these examples show that
even $E(R,I^2)<[E(I),E(I)]$. But by \cite{NZ2}, Theorem 5, 
the stability map
\begin{multline*}
\big[E(n,A),E(n,B)\big]/E(n,R,AB+BA) \map\\
\big[E(n+1,A),E(n+1,B)\big]/E(n+1,R,AB+BA) 
\end{multline*}
is surjective for all for all $n\ge 3$ and it is even an isomorphism
when $n\ge\max(\sr(A)+1,3)$, by \cite{Hazrat_Vavilov_Zhang},
Lemma 15. Thus, these counter-examples work already at
the level of $\GL(3,R)$ or $\GL(4,R)$.
\par\smallskip
$\bullet$ Let $R=\Rat[x,y]$, $I=xR+yR$. Then
$$ z=\begin{pmatrix} 
1-xy&x^2&0\\ -y^2&1+xy&0\\ 0&0&1
\end{pmatrix} = \left[
\begin{pmatrix} 
1&0&x\\ 0&1&y\\ 0&0&1
\end{pmatrix},
\begin{pmatrix} 
1&0&0\\ 0&1&0\\ -y&x&1
\end{pmatrix} 
\right]\in [E(3,I),E(3,I)]. $$
\noindent
But by \cite{Weibel} one has $K_1(R,I^2)=\GL(R,I^2)/E(R,I^2)\cong\Rat$ and under
this isomorphism the Mennicke symbol 
$\displaystyle\left[\!{x^2\atop 1-xy}\!\right]$ goes to $2\in\Rat$.
This means that $z\notin E(R,I^2)$ and thus $z\notin E(3,R,I^2)$.
\par\smallskip
$\bullet$ Let $R=\Int[x]$, $I=xR$. Then clearly
$$ y_{21}(x,x)=
\begin{pmatrix} 
1-x^2&x^3&0\\ -x^3&1+x^2+x^4&0\\ 0&0&1
\end{pmatrix} \in [E(3,I),E(3,I)]. $$
\noindent
But 
the Mennicke symbol
$\displaystyle\left[\!{x^3\atop 1-x^2}\!\right]$ is non-trivial,
so that $y_{21}(x,x)\notin E(R,I^2)$ and thus 
$y_{21}(x,x)\notin E(3,R,I^2)$.
\par\smallskip

Let us reiterate \cite{NZ2}, Problem 1, and make it more specific.
We believe that most of the relations are already listed above
in Lemmas 10 and 11.

\begin{problem}
Give a presentation of
$$ \big[E(n,A),E(n,B)\big]/\EE(n,R,AB+BA) $$
\noindent
by generators and relations. Does this presentation depend on 
$n\ge 3$? 
\end{problem}

If it does not, this would in particular produce a solution of
Problem~1 on a completely different track.
\par
For Chevalley groups the ground ring is commutative from
the outset, so that the standard commutator formula holds.
But for non-algebraic forms of classical groups the alternative
approach taken in the present paper very much makes sense. 
In fact, for
Bak's unitary groups it is now fully incorporated in our paper
\cite{NZ3}, where we, in particular reduce the generating 
sets of the elementary commutators 
$[\EU(2n,A,\Gamma),\EU(2n,B,\Delta)]$ and remove all 
remaining restrictions on the form ring $(R,\Lambda)$ in
all results if \cite{Hazrat_Vavilov_Zhang, RNZ5}, and other
related works, pertaining to the elementary unitary groups.
In \cite{NZ3} they are established for {\it arbitrary\/} form
rings, whereas before they were only known for form rings 
subject to some commutativity conditions such as quasi-finiteness.
\par\smallskip
We are very grateful to  Roozbeh Hazrat and Alexei Stepanov 
for ongoing discussion of this circle of ideas, and long-standing cooperation over the last decades. 


\end{document}